\documentclass[pdflatex,sn-mathphys-ay]{sn-jnl}


\usepackage{graphicx}%
\usepackage{multirow}%
\usepackage{amsmath,amssymb,amsfonts}%
\usepackage{amsthm}%
\usepackage{mathrsfs}%
\usepackage[title]{appendix}%
\usepackage{xcolor}%
\usepackage{textcomp}%
\usepackage{manyfoot}%
\usepackage{booktabs}%
\usepackage{listings}%

\usepackage{enumerate}
\usepackage{natbib}
\usepackage{url} 
\usepackage[utf8]{inputenc}
\usepackage{epsfig}
\usepackage{latexsym}
\usepackage{varioref}
\usepackage{hhline}
\usepackage{mathtools}
\usepackage[inline]{enumitem}
\usepackage{longtable}
\usepackage{lineno}
\usepackage{multirow}
\usepackage{verbatim, multirow,setspace}
\usepackage{float}
\usepackage{caption}

\theoremstyle{thmstyleone}%
%

\theoremstyle{thmstyletwo}%

\theoremstyle{thmstylethree}%

\raggedbottom

\allowdisplaybreaks

\newcommand{\beq}{\begin{equation}}
	\newcommand{\eeq}{\end{equation}}

\newtheorem{lemma}{Lemma}

\newtheorem{prop}{Proposition}

\def\ba{\begin{array}}
	\def\ea{\end{array}}

\newcommand{\rd}{\mathrm d}
\newcommand{\rE}{\mathrm E}
\newcommand{\rP}{\mathrm P}

\newcommand{\eqd}{\stackrel{d}{=}}
\newcommand{\cond}{\stackrel{d}{\rightarrow}}

\newcommand{\conp}{\stackrel{p}{\rightarrow}}

\begin{document}

\title[Confidence Intervals Using Turing's Estimator: Simulations and Applications]{Confidence Intervals Using Turing's Estimator: Simulations and Applications}

\author[1]{\fnm{Jie} \sur{Chang}}\email{changjie512@gmail.com}

\author[1]{\fnm{Michael} \sur{Grabchak}}\email{mgrabcha@charlotte.edu}

\author*[2]{\fnm{Jialin} \sur{Zhang}}\email{jzhang@math.msstate.edu}

\affil[1]{\orgdiv{Department of Mathematics and Statistics}, \orgname{UNC Charlotte}, \orgaddress{\street{9201 University City Blvd}, \city{Charlotte}, \postcode{28223}, \state{NC}, \country{USA}}}

\affil*[2]{\orgdiv{Department of Mathematics and Statistics}, \orgname{Mississippi State University}, \orgaddress{\street{75 B. S. Hood Rd}, \city{Mississippi State}, \postcode{39762}, \state{MS}, \country{USA}}}

\abstract{Consider the problem of sequentially sampling categorical data from a finite or countably infinite alphabet. After observing $n$ data points, we aim to estimate the $r$th occupancy probability, which is the total probability of all outcomes that appear exactly $r$ times in the sample. The $0$th occupancy probability, often called the missing mass, is the probability of observing a previously unseen outcome. One of the best-known estimators for these probabilities is Turing’s estimator. In this paper, we study and compare several confidence intervals (CIs) for the occupancy probabilities that are based on Turing's estimator. We perform a simulation study to better understand the finite-sample performance of these CIs and introduce an approach for selecting the appropriate CI for a given sample. We also present an application to the problem of authorship attribution and apply it to a dataset consisting of tweets from users on X (formerly Twitter). Furthermore, we derive theoretical results on the asymptotic distribution of Turing's estimator for geometric and discrete uniform distributions.}

\keywords{Turing's estimator, Good-Turing estimation, missing mass, authorship attribution, confidence intervals}

\maketitle

\section{Introduction}

Estimating the probability of unseen or rarely observed outcomes is an important problem in statistics, with applications ranging from linguistics and ecology to natural language processing and cancer research. In these contexts, data are generated sequentially from a categorical distribution over a finite or countably infinite alphabet, where the underlying distribution and sometimes even the size of the alphabet are unknown. In this context, key quantities of interest are the occupancy probabilities. Given a sample of size $n$, for any integer $0\le r\le n$, the $r$th occupancy probability is defined as the total probability of all outcomes that appear exactly $r$ times in the sample. Of particular importance is the case $r=0$, also known as the missing mass, which corresponds to the probability of seeing a previously unobserved outcome.

Turing’s estimator is one of the most widely used methods for estimating the occupancy probabilities, and its theoretical properties have been studied extensively. Many of these properties can be used to construct confidence intervals (CIs). In particular, CIs can be derived from results on the asymptotic distribution of Turing’s estimator and from concentration inequalities. However, there has not yet been a systematic effort to compare the various CIs proposed in the literature.

This paper has four main goals. First, we conduct a simulation study to evaluate and compare the finite-sample performance of the different CIs. Second, we introduce a method for choosing the appropriate CI for a given dataset. Third, we develop a novel methodology for applying these CIs to the problem of authorship attribution. We then apply this methodology to a dataset consisting of tweets from users on X (formerly Twitter). The goal is to check if two X accounts are from the same author, which is important for the problem of identifying fake accounts. Fourth, we derive theoretical results about the asymptotic distribution of Turing's estimator when sampling from discrete uniform and geometric distributions. This is important since currently available sufficient conditions are fairly complicated and verifying them in concrete situations is not trivial.

The rest of this paper is organized as follows. In Section \ref{sec:lit rev}, we give a brief overview of the literature on Turing's estimator. In Section \ref{sec: CIs}, we formally present Turing's estimator, review the various asymptotic results available in the literature, and convert them into CIs. We also introduce the so-called Heuristic CI, which alternates between the others depending on the properties of a given random sample. In Section \ref{sec: sims}, we give our main simulation results with a focus on three classes of distributions: discrete uniform, geometric, and discrete Pareto. To give the simulations more context, we also present our theoretical results here. In Section \ref{sec: concentration ineq CIs}, we extend our simulation study to compare the performance of the asymptotic CIs with concentration inequality-based ones. In Section \ref{sec: application}, we develop a novel methodology for authorship attribution and apply it to a dataset consisting of tweets from users on X. A summary along with a discussion is given in Section \ref{sec: disc}. Appendix \ref{sec: cond} summarizes the known sufficient conditions for the asymptotic distribution of Turing's estimator and Appendix \ref{sec: proofs} presents the proofs of our theoretical results.

\section{Literature Review}\label{sec:lit rev}

The problem of observing categorical data on a finite or countably infinite alphabet arises in many applications. The letters in this alphabet can represent different things depending on the context. In this paper, we focus on applications to linguistics, where letters represent words used by an author \citep{Zhang:Huang:2007}. In other applications, they may represent species in an ecosystem \citep{Chao:etal:2015}, anatomical locations of metastatic tumors \citep{Newton:etAl:2014}, expressed sequence tags (ESTs) for genetic identification \citep{Lijoi:Mena:Prunster:2007}, word trigrams for speech recognition \citep{Gupta:Lennig:Mermelstein:1992}, colors of balls in an urn \citep{Decrouez:Grabchak:Paris:2016}, or different fortunes placed into fortune cookies \citep{Gou:Ruth:Basickes:Litwin:2024}. In all of these contexts, Turing's estimator can be used to estimate the occupancy probabilities, and, what is often more interesting, the missing mass.

Turing’s estimator, also known as Turing’s formula or the Good-Turing estimator, was first published in \cite{Good:1953}, with the idea largely credited to Alan Turing. Turing had developed it while working to break the Enigma code during World War II; see \cite{Good:2000} for a discussion of its applications in this context. Although the formula is simple to state, it is not immediately clear how or why it works. According to \cite{Good:2000}, Turing had provided a straightforward demonstration to justify it, but Good later forgot the details, and the explanation has been lost to history. Instead, \cite{Good:1953} offers a justification based on what is now known as empirical Bayes. Since then, there has been much work on understanding the statistical properties of Turing's estimator.

One of the earliest papers to study this is \cite{Robbins:1968}, which considers the bias of Turing’s estimator and shows that it would be unbiased if we had an additional observation. Other studies of the bias include \cite{Chao:Lee:Chen:1988} and \cite{Zhang:Huang:2007}, which independently develop a modification of Turing's estimator with reduced bias. Studies of the mean square error (MSE) of Turing's estimator can be found in \cite{Painsky:2022} and the references therein. Consistency of Turing's estimator is studied in \cite{Ohannessian:Dahleh:2012}, where it is emphasized that the standard notion of consistency is not appropriate because the quantity being estimated is not fixed but converging to zero as the sample size increases. Instead, they propose analyzing consistency in terms of relative error. A simulation study aimed at understanding the finite-sample performance of Turing's estimator can be found in \cite{Grabchak:Cosme:2017}. 

Our interest is in CIs for the occupancy probabilities. There are two main approaches: the first is based on asymptotic distributions and the second on concentration inequalities. Perhaps the first papers to study the asymptotic distribution of Turing's estimator are \cite{Esty:1982} and \cite{Esty:1983}, which give sufficient conditions for asymptotic Poissonity and asymptotic normality, respectively, in the case $r=0$. More general sufficient conditions, in this case, are given in \cite{Zhang:Huang:2008} and then necessary and sufficient conditions are given in \cite{Zhang:Zhang:2009}. Extensions of these results to any $r\ge0$ are given in \cite{Zhang:2013}, \cite{Grabchak:Zhang:2017}, and \cite{Chang:Grabchak:2023}. The study of concentration inequalities for Turing's formula was initiated in \cite{McAllester:Schapire:2000}. More recent results can be found in \cite{Painsky:2023}, \cite{Favaro:Naulet:2024}, and the references therein. 

A monographic treatment of the statistical properties of Turing's estimator can be found in \cite{Zhang:2017}. More recently, variants of Turing's estimator have been introduced to estimate certain quantities beyond the missing mass and the occupancy probabilities, see, e.g., \cite{Painsky2023large}, \cite{Favaro:Naulet:2024}, and \cite{Chandra:2024}. In a different direction, Turing's estimator has been applied in a context where the data is not independent, but follows a Markovian structure, see, e.g., \cite{Pananjady:Muthukumar:Thangaraj:2024} and the references therein.

\section{Confidence Intervals for Occupancy Probabilities}\label{sec: CIs}

Let $\mathcal{A}$ be a finite or countably infinite set. We refer to $\mathcal A$ as the alphabet and to the elements in $\mathcal{A}$ as letters. Let $\mathcal{P}= \{ p_{\ell}: \ell \in \mathcal{A}\}$, with $\sum_{\ell\in\mathcal A}p_\ell=1$ and $p_\ell\in[0,1]$ for each $\ell\in\mathcal A$, be a probability distribution on $\mathcal A$.  In practice, we do not know the distribution $\mathcal P$, which must be estimated from data. Let  $X_1,X_2,\dots,X_n$ be a random sample of size $n$ on alphabet $\mathcal{A}$ with distribution $\mathcal{P}$.  For each $\ell \in \mathcal{A}$, let $y_{\ell,n} =\sum_{i=1}^{n}1_{\left[X_i = \ell\right]}$ be the number of times that letter $\ell$ appears in the sample. For $r = 0,1,2,\dots,n$, define
\begin{align*}
	\pi_{r,n}=\sum_{\ell \in \mathcal{A}}p_{\ell} 1_{[y_{\ell,n}=r]}    \hspace{0.2cm}\mbox{and}\hspace{0.2cm}
	N_{r,n}= \sum_{\ell \in \mathcal{A}} 1_{[y_{\ell,n} =r]}.
\end{align*}
Here, $\pi_{r,n}$ is the conditional probability (given the sample) of seeing a letter that appears exactly $r$ times in the sample and $ N_{r,n}$ is the number of letters that appear exactly $r$ times. Note that $0\le \pi_{r,n}\le1$ and that, for $r\ge1$, we have $0\le N_{r,n}\le n/r$. We refer to $\pi_{r,n}$ as the $r$th occupancy probability and to $N_{r,n}$ as the $r$th occupancy count. When $r=0$, the quantity $\pi_{0,n}$ is also called the missing mass.  It represents the probability of seeing a letter that has not been seen previously. The quantity $1-\pi_{0,n}$ is called the coverage of the random sample. When this quantity is small, the sample is not representative and does not provide much information about the underlying distribution; see \cite{Zhang:Grabchak:2013} for a discussion.

For $r= 0, 1, 2,\dots, (n-1)$ the $r$th-order Turing estimator is an estimator of $\pi_{r,n}$ given by
\begin{align*}
	T_{r,n}=\frac{N_{r+1,n}}{n}(r+1).
\end{align*}
Its bias is given by
\begin{eqnarray}\label{eq: bias}
	\rE\left[T_{r,n}- \pi_{r,n}\right] = \binom{n}{r} \sum_{\ell\in\mathcal A}p_{\ell}^{r+1}(1-p_{\ell})^{n-r-1}\left(p_{\ell}-\frac{r}{n}\right).
\end{eqnarray}
The asymptotic standard deviation of $n\left(T_{r,n}-\pi_{r,n}\right)$ can be written as
\begin{eqnarray}\label{eq: asymptotic sd}
	s_{r,n}=\sqrt{ \sum_{\ell\in\mathcal A} (r+1+np_{\ell}) e^{-np_{\ell}} \frac{(np_{\ell})^{r+1}}{r!}},
\end{eqnarray}
and it can be estimated by
\begin{align*}
	\hat{s}_{r,n} =\sqrt{(r+1)^2 N_{r+1,n} + (r+2)(r+1) N_{r+2,n}},
\end{align*}
see \cite{Chang:Grabchak:2023} for details.

We are interested in studying confidence intervals (CIs) for $\pi_{r,n}$. This is a somewhat unusual quantity to construct CIs for, as it is neither a parameter (since it depends on the random sample) nor a statistic (since it depends on unknown parameters). Nevertheless, one can interpret a CI for this quantity in the usual frequentist manner. Specifically, if we repeatedly obtain random samples of size $n$ and for each sample we construct a $(1-\alpha)100\%$ CI, then, in the long run, $\pi_{r,n}$ should be in $(1-\alpha)100\%$ of these CIs. The difference is that $\pi_{r,n}$ depends on the sample and is thus a different quantity for each random sample. While no exact CIs for $\pi_{r,n}$ are known, several approximate CIs have been proposed in the literature. These are based either on asymptotic results or on concentration inequalities. We are primarily focused on asymptotic CIs, but see Section \ref{sec: concentration ineq CIs} for a discussion of concentration inequality-based CIs. Asymptotic CIs are based on various limit theorems, which we now recall.

If $s_{r,n}\to\infty$, then, under mild conditions (see Appendix \ref{sec: cond}), we have
\begin{eqnarray}\label{eq: asymp normal}
	\frac{n}{\hat s_{r,n}}\left(T_{r,n}-\pi_{r,n}\right) \cond N(0,1)
\end{eqnarray}
and 
\begin{eqnarray}\label{eq: asymp ratio normal}
	T_{r,n} \frac{n}{\hat s_{r,n}}\left(\frac{T_{r,n}}{\pi_{r,n}}-1 \right)\cond N(0,1).
\end{eqnarray}
These lead to the $(1-\alpha)100\%$ CIs for $\pi_{r,n}$ given, respectively, by
\begin{eqnarray}\label{eq: Normal CI}
	\left[T_{r,n}-z_{\alpha/2}\frac{\hat{s}_{r,n}}{n}, T_{r,n}+ z_{\alpha/2}\frac{\hat{s}_{r,n}}{n} \right]
\end{eqnarray}
and 
\begin{eqnarray}\label{eq: Normal Ratio CI}
	\left[\frac{T^2_{r,n}}{T_{r,n}+z_{\alpha/2} \frac{\hat{s}_{r,n}}{n}}, \frac{T^2_{r,n}}{T_{r,n} -z_{\alpha/2} \frac{\hat{s}_{r,n}}{n}}\right],
\end{eqnarray}
where $z_{\alpha/2}$ denotes the $1-\alpha/2$ quantile of the standard normal distribution. Since $\pi_{r,n}\in[0,1]$, we make the following adjustments. For the CI in \eqref{eq: Normal CI}, we take the lower bound to be $0$ when $T_{r,n}\le z_{\alpha/2}\frac{\hat{s}_{r,n}}{n}$ and the upper bound to be $1$ when $T_{r,n}+ z_{\alpha/2}\frac{\hat{s}_{r,n}}{n}\ge1$. Furthermore, when $\hat{s}_{r,n}=0$, it necessarily follows that $T_{r,n}=0$ and the CI reduces to the point estimator $\{0\}$ in this case. For the CI in \eqref{eq: Normal Ratio CI}, we take it to be the point estimator $\{0\}$ when $T_{r,n}=0$ and we take the upper bound to be $1$ when $0<T_{r,n}\le z_{\alpha/2}\frac{\hat{s}_{r,n}}{n}$. Note that the lower bound cannot be negative in this case. We refer to the CI in \eqref{eq: Normal CI} as the \textbf{Normal CI} and to the one in \eqref{eq: Normal Ratio CI} as the \textbf{Normal Ratio CI}. The Normal CI was first given for $r=0$ in \cite{Zhang:Huang:2008} and it was extended to $r\ge1$ in \cite{Zhang:2013}, see also \cite{Zhang:Zhang:2009} and \cite{Chang:Grabchak:2023}. We have not seen the Normal Ratio CI written out before. However, the theory was developed in \cite{Grabchak:Zhang:2017} and \cite{Chang:Grabchak:2023}.

If $s_{r,n}\to c\in(0,\infty)$, then, under mild conditions (see Appendix \ref{sec: cond}), we have asymptotic Poissonity, i.e., a Poisson distribution in the limit. In this case, for $c^*=c^2/(r+1)^2$, we have 
\begin{eqnarray}\label{eq: asymp Pois}
	\rE\left(\frac{n-r}{r+1}\pi_{r,n}-c^*\right)^2\to0, \mbox{   } \frac{n}{r+1}\pi_{r,n}\conp c^*, \mbox{  and  }\frac{n}{r+1}T_{r,n}\cond \mathrm{Pois}(c^*),
\end{eqnarray}
where $\mathrm{Pois}(c^*)$ denotes a Poisson distribution with mean $c^*$. There are many approaches for deriving a CI for the mean of a Poisson distribution; see, e.g.,\ the survey paper \cite{Sahai:Khurshid:1993} and the references therein. We follow a standard approach based on a chi-squared approximation, which can be found in Section 4.7.3 of \cite{Johnson:Kemp:Kotz:2005}. The resulting $(1-\alpha)100\%$ CI for $\pi_{r,n}$ is given by
\begin{eqnarray}\label{eq: Poisson CI}
	\left[ \frac{r+1}{2n}\chi^2 \left(\alpha/2, \frac{2n T_{r,n}}{r+1}\right), \frac{r+1}{2n} \chi^2 \left(1 - \alpha/2, \frac{2n T_{r,n}}{r+1} + 2\right) \right],
\end{eqnarray}
where $\chi^2(a,\nu)$ denotes the $a$ quantile of the chi-squared distribution with $\nu$ degrees of freedom. When $T_{r,n}=0$, we take this interval to be $\left[0, \right.$ $\left.\frac{r+1}{2n} \chi^2 \left(1 - \alpha/2, 2\right)\right]$. Thus, this CI never degenerates to a point. We refer to the CI in \eqref{eq: Poisson CI} as the \textbf{Poisson CI}. We have not seen this CI written out before. However, the theory was derived in Theorem 2 of \cite{Zhang:Zhang:2009} for $r=0$ and Theorem 2.3 in \cite{Chang:Grabchak:2023} for $r\ge1$. We note that, for $r=0$, a similar CI for the related quantity $1-\pi_{0,n}$ is given in \cite{Esty:1982}.

From a theoretical perspective, the CIs based on asymptotic normality should work better when $s_{r,n}\to\infty$ and the one based on asymptotic Poissonity should work better when $s_{r,n}\to c\in(0,\infty)$. In practice, of course, we do not know the asymptotics of $s_{r,n}$. However, we can estimate $s_{r,n}$ by $\hat s_{r,n}$, which suggests the following heuristic for choosing between the two types of CIs. First, select a threshold $V>0$. If $\hat s_{r,n}< V$, we use the Poisson CI given in \eqref{eq: Poisson CI}, otherwise we use the Normal CI given in \eqref{eq: Normal CI}. We refer to this as the \textbf{Heuristic CI}. Here, $V$ is a tuning parameter. In principle, one could substitute the Normal Ratio CI given in \eqref{eq: Normal Ratio CI} for the Normal CI. However,  we opt not to do so as we found that the resulting CI does not, in general, perform as well.

We now discuss several variants of the above CIs that appear in the literature. Perhaps the most prominent is the CI introduced in \cite{Esty:1983}. While there it was only given for $r=0$, it can be formulated for any $r$ as follows. A $(1-\alpha)100\%$ CI for $\pi_{r,n}$ is given by
\begin{eqnarray}\label{eq: Normal Esty CI}
\left[T_{r,n}-\frac{z_{\alpha/2}}{n}\sqrt{\hat{s}_{r,n}^2 -N_{r+1,n}^2/n} , T_{r,n}+ \frac{z_{\alpha/2}}{n} \sqrt{\hat{s}_{r,n}^2 -N_{r+1,n}^2/n}\ \right].
\end{eqnarray}
While asymptotically it is equivalent to the CI in \eqref{eq: Normal CI}, for any finite $n$ this CI is narrower. In a different direction, all of the CIs discussed above can be constructed using a modified version of Turing's estimator. This version is used in, e.g., \cite{Zhang:2013} and \cite{Grabchak:Zhang:2017} and it is given by
\begin{eqnarray}\label{eq: modified Turing}
T^*_{r,n}=\frac{N_{r+1,n}}{n-r}(r+1)=\frac{n}{n-r}T_{r,n}.
\end{eqnarray}
The bias of this estimator is as in \eqref{eq: bias}, but with $p_{\ell}$ in place of $(p_{\ell}-r/n)$. Asymptotically there is no difference between the two. All of the CIs discussed above can be modified to use $T^*_{r,n}$ in place of $T_{r,n}$. We note that $T_{r,n}$ is the form that was originally introduced in \cite{Good:1953}.

\section{Simulations}\label{sec: sims}

We performed a series of simulations to better understand the performance of the various CIs discussed in Section \ref{sec: CIs}. In the interest of space, we only give detailed results for three CIs: the Normal CI given in \eqref{eq: Normal CI}, the Poisson CI given in \eqref{eq: Poisson CI}, and a version of the Heuristic CI with tuning parameter $V=2$. Recall that the Heuristic CI chooses the Poisson CI when $\hat s_{r,n}< V$ and otherwise it chooses the Normal CI. We tried a variety of choices for $V$ and $V=2$ seemed to work very well. We leave the question of how to optimize the choice of $V$ for future work.

While we do not present detailed results for the other CIs, we performed many simulations with them and now give a brief summary. We found that there is, essentially, no difference in using CIs based on the modified Turing's estimator $T^*_{r,n}$ in place of $T_{r,n}$, even for small sample sizes. When comparing the CI in \eqref{eq: Normal Esty CI} with the Normal CI, we observed that the Normal CI was typically a little wider, but that it had better coverage, sometimes significantly so. The Normal Ratio CI given in \eqref{eq: Normal Ratio CI} typically had comparable coverage to the Normal CI, but it tended to have wider, sometimes much wider, CIs. This seems to be caused by the fact that the upper bound in this CI can be unstable due to the fact that the denominator can approach $0$. We now present our detailed simulation results, which focus on the Normal CI, the Poisson CI, and the Heuristic CI in the context of three families of distributions.

\subsection*{Discrete Uniform Distribution}

\begin{figure}[t]
	\centering
	\includegraphics[width=\linewidth]{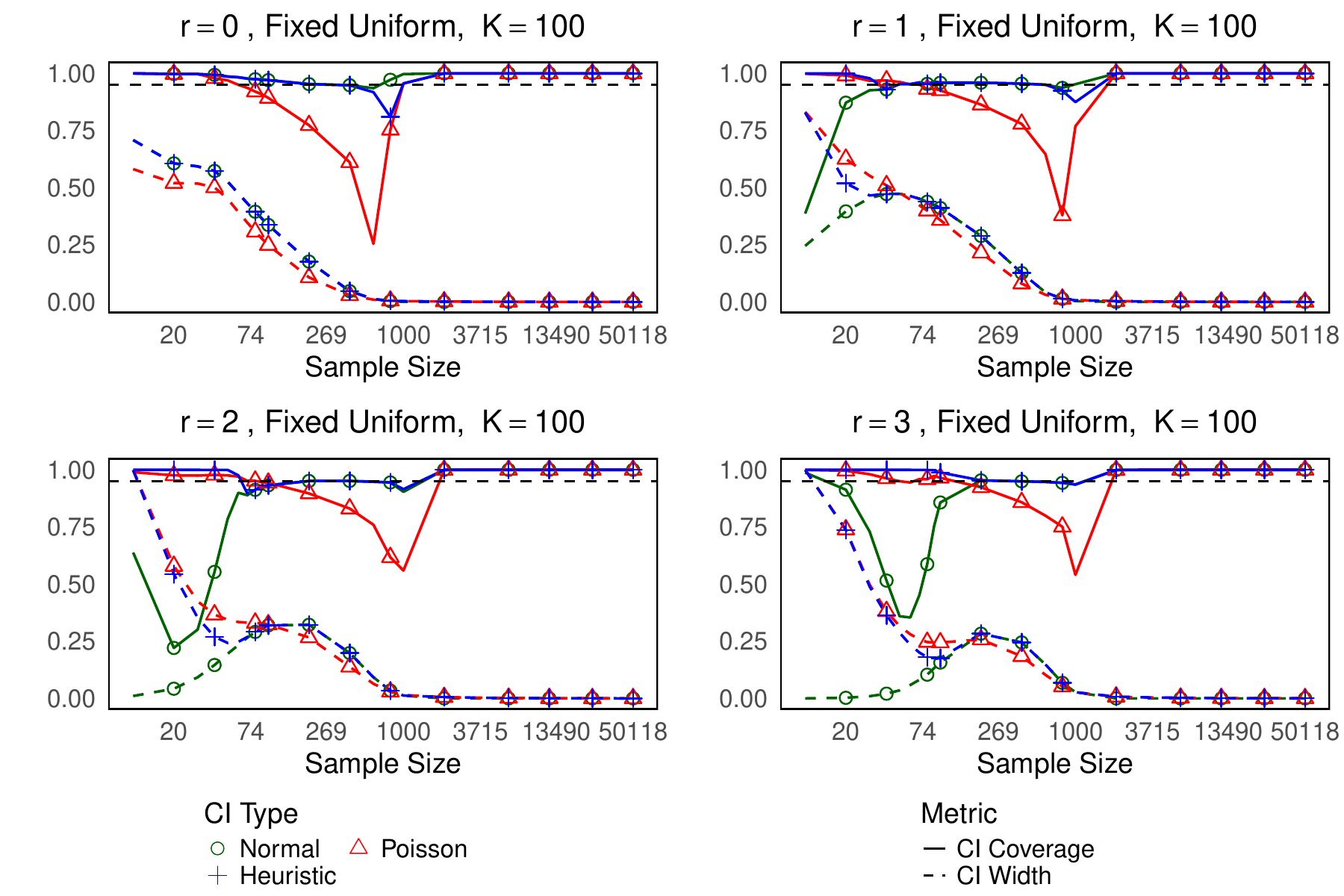}
	\caption[Results for the Discrete Uniform Distribution.]{
		\textbf{(a)} $K = 100$.
		Results for the Discrete Uniform Distribution. Each panel shows the coverage proportions and the mean widths for the three $95\%$ confidence intervals for different choices of $r$ and $K$. These results are based on $N = 5000$ replications. The sample size on the $x$-axis is presented on a log (base 10) scale. The horizontal dashed line indicates the nominal level of $0.95$.
	}
	\label{fig:fixed_uniform}
\end{figure}

\begin{figure}[t]\ContinuedFloat
	\centering
	\includegraphics[width=\linewidth]{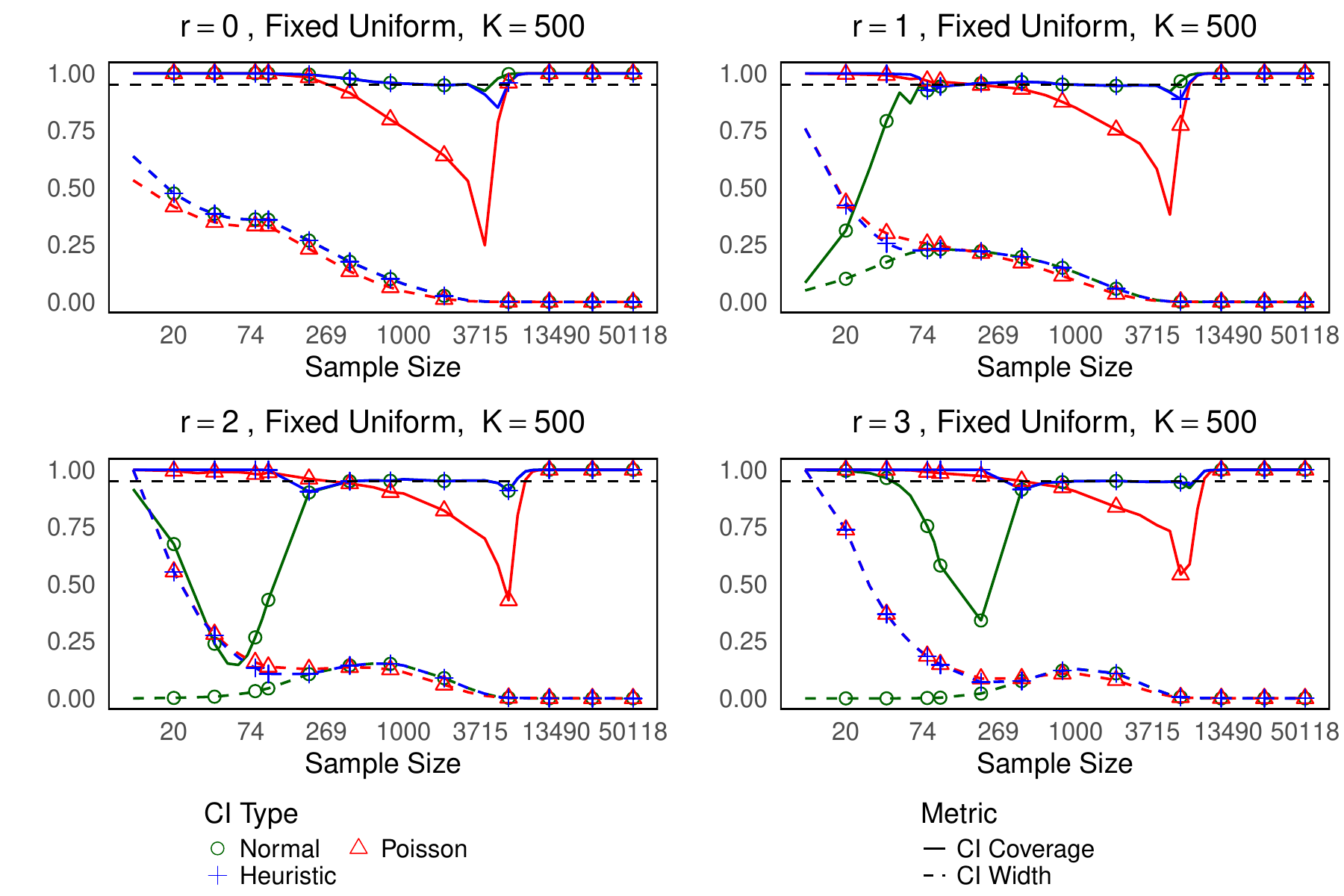}
	\caption[]{\textbf{(b)} $K = 500$. Results for the Discrete Uniform, continued.}
\end{figure}

\begin{figure}[t]\ContinuedFloat
	\centering
	\includegraphics[width=\linewidth]{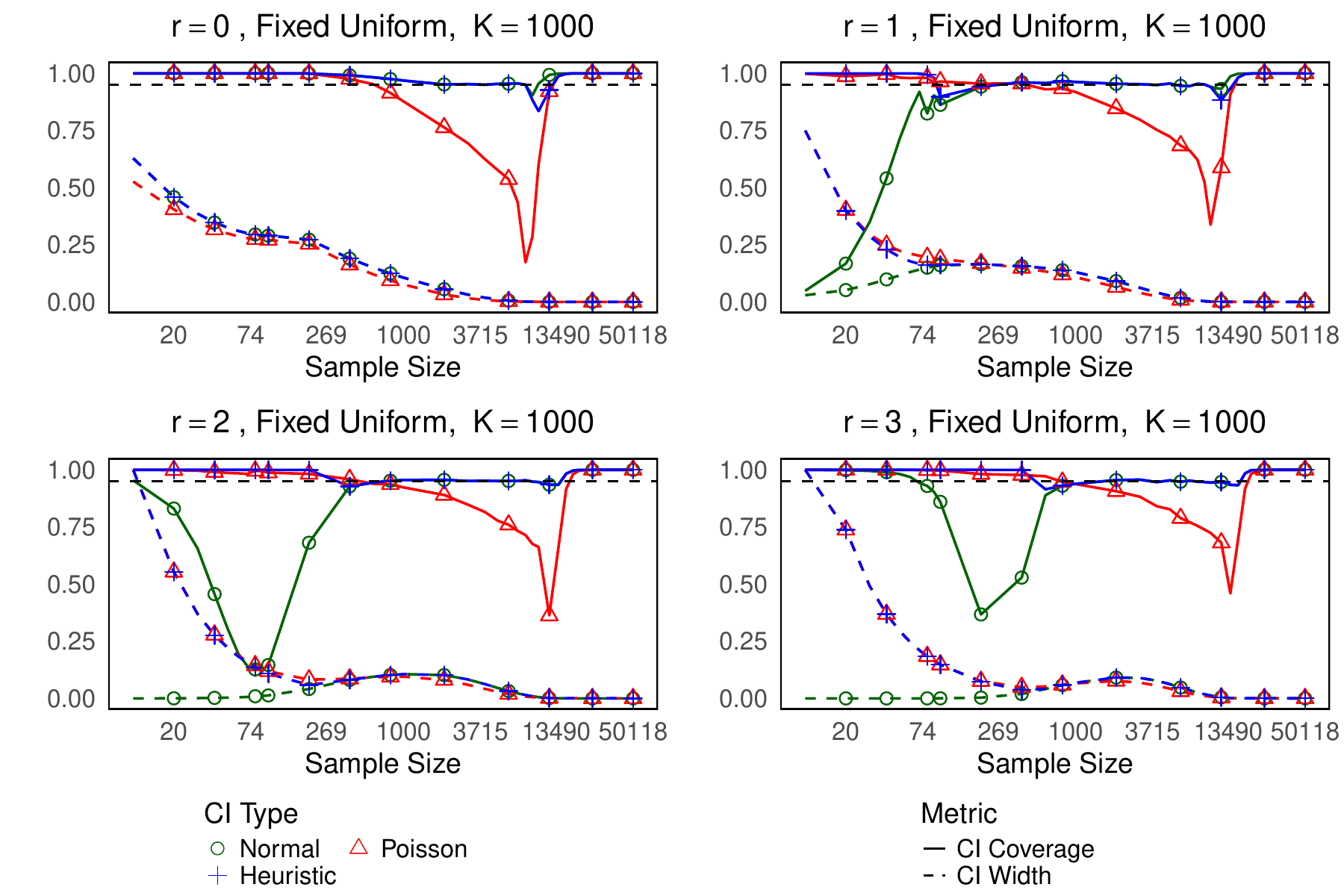}
	\caption[]{\textbf{(c)} $K = 1000$. Results for the Discrete Uniform, continued.}
\end{figure}

The discrete uniform distribution has a probability mass function (pmf) given by
\begin{equation}\label{eq: fixed unif}
	p_\ell= 1/K, \ \ \ell = 1, 2, 3, \dots, K,
\end{equation}
where integer $K\ge1$ is a parameter. It is easily checked that 
$$s^2_{r,n} = K\left(r+1+n/K\right)e^{-n/K} \frac{(n/K)^{r+1}}{r!}\to0$$ as $n\to\infty$. Thus, the known sufficient conditions needed for the asymptotics underpinning the various CIs do not hold. Nevertheless, our simulation results suggest that the CIs generally work well.  

For the simulations, we considered three choices of $K=100, 500,1000$ and a variety of sample sizes $n$. For each choice of $K$ and $n$, we simulated $N=5000$ samples (replications). For each sample and each $r=0,1,2,3$, we constructed the Normal CI, the Poisson CI, and the Heuristic CI, each at the $95\%$ confidence level. We then found the coverage proportion, i.e.,\ the proportion of the time that each CI contained the true value $\pi_{r,n}$, and we calculated the sample means of the widths of the CIs. This information is summarized in Figure~\ref{fig:fixed_uniform}(a)--(c). We can see that the Normal CI and the Heuristic CI have very good performance, with the Poisson CI typically performing worse. While there are some cases where the Poisson CI outperforms the Normal CI in terms of coverage, this is at the expense of wider, often significantly wider, intervals. The Heuristic CI always has the best performance in terms of coverage and its width is never wider than the largest width of the other two.

It may be interesting to note that the proportion of CIs containing the true value does not converge to $0.95$, but to $1$. This cannot be explained through having wide CIs, as the widths are very close to $0$ when this happens. The explanation is that the discrete uniform distribution has only a finite number of possible values and, for large enough $n$, we are likely to  see all letters multiple times. This would imply that $\pi_{r,n}=0$, $T_{r,n}=0$, and $\hat s_{r,n}=0$. In this case the Normal CI becomes the singleton $\{0\}$ and any interval that contains $0$ will contain $\pi_{r,n}$. A similar situation holds for the Poisson CI, although it never fully degenerates into a point.

What may be even more interesting is that the coverage of the Normal CI appears to converge to $0.95$ for large, but not too large, values of the sample size $n$. Such apparent convergence before the actual convergence is sometimes called pre-limit behavior, see \cite{Grabchak:Samorodnitsky:2010} and the references therein for a discussion in a somewhat different context. The theoretical explanation for this pre-limiting, apparent convergence is the fact that, while the limits in \eqref{eq: asymp normal}, \eqref{eq: asymp ratio normal}, and \eqref{eq: asymp Pois} do not hold for any fixed value of $K$, they can hold when $K$ increases with the sample size. Specifically, assume that, for sample size $n$, the parameter $K$ is given by $K=\lfloor n^\gamma\rfloor$, where $\gamma>0$ and $\lfloor \cdot\rfloor$ is the floor function. In this case, we have a sequence of discrete uniform distributions, where the $n$th distribution has pmf
\begin{eqnarray}\label{eq: dynamic unif}
	p_{\ell}^{(n)} = \frac{1}{\lfloor n^\gamma\rfloor}, \ \ \ell=1,2,\dots,\lfloor n^\gamma\rfloor,
\end{eqnarray}
for some $\gamma>0$ that does not depend on $n$. This distribution is changing dynamically with the sample size, and we refer to it as the dynamic discrete uniform. In this context, we refer to the distribution given in \eqref{eq: fixed unif} as the fixed discrete uniform distribution. Note that the larger the value of $\gamma$, the faster the number of letters in the distribution grows. When discussing dynamic distributions, we take $s_{r,n}$ to be as given by \eqref{eq: asymptotic sd}, but with $p_{\ell}^{(n)}$ in place of $p_\ell$.

We derive asymptotic theory for the dynamic discrete uniform in Appendix \ref{sec: dynamic uniform theory}. The results can be summarized as follows:
\begin{itemize}
	\item[1.] For $r=0$ and $\gamma=1$ and for $r\ge1$, $\gamma>1$, and $\gamma<1+1/r$, we have asymptotic normality.
	\item[2.] For $r\ge1$, $\gamma>1$, and $\gamma=1+1/r$, we have asymptotic Poissonity with mean $\frac{1}{(r+1)!}$.
	\item[3.] For $r\ge1$, $\gamma>1$, and $\gamma>1+1/r$, we have $s_{r,n}\to0$ with $s^2_{r,n}\sim \frac{r+1}{r!} n^{r+1-\gamma r}$. 
	\item[4.] For $\gamma\in (0,1)$, we have $s_{r,n}\to0$ with $s^2_{r,n}\sim n^{2+r-\gamma(1+r)}e^{-n/\lfloor n^\gamma\rfloor}$.
	\item[5.] Current state-of-the-art asymptotic results cannot tell us what happens when $r=0$ and $\gamma>1$ or when $r\ge1$ and $\gamma=1$. However, we do know that $s_{r,n}\to\infty$ in these cases.
\end{itemize}
Recall that $s_{r,n}$ is the asymptotic standard deviation of $n\left(T_{r,n}-\pi_{r,n}\right)$. Thus, $s_{r,n}\to0$ means that there is either no asymptotic distribution or that the rate of convergence is strictly slower than $1/n$. For $r\ge1$ we have $s_{r,n}\to0$ both when $\gamma$ is small ($\gamma\in (0,1)$) and when it is large ($\gamma>1+1/r$). This seems to be for opposite reasons. When $\gamma$ is small, the number of letters grows slowly as compared to the sample size, and for large samples we are likely to have observed all of the letters more than $r$ times. On the other hand, when $\gamma$ is large, the number of letters grows quickly, and for large samples we are unlikely to have seen any letter $r$ or more times.

In the context of the fixed discrete uniform distribution, the asymptotic results suggest the following. Assume that the parameter $K$ in \eqref{eq: fixed unif} is relatively large. If for some values of $r$ and $\gamma$ we have asymptotic normality in the dynamic case, then for (large) sample sizes $n$ that are on the order of $K^{1/\gamma}$, we should be close to normal and the Normal CI should work well. However, for sample sizes $n$ that are much larger than $K^{1/\gamma}$, we should no longer be close to normal. A similar result should hold for asymptotic Poissonity. Note that for $r\ge1$, asymptotic Poissonity holds for larger values of $\gamma$ (smaller values of $1/\gamma$) than the values where asymptotic normality holds. Thus, we would expect the Poisson CI to work well for large (but not too large) sample sizes, then for even larger sample sizes we would expect the Normal CI to work well, and for very large sample sizes everything should approach $1$ by the arguments given previously. This is very much in keeping with what we see in Figure~\ref{fig:fixed_uniform}(a)--(c).

\begin{figure}[t]
	\centering
	\includegraphics[width=\linewidth]{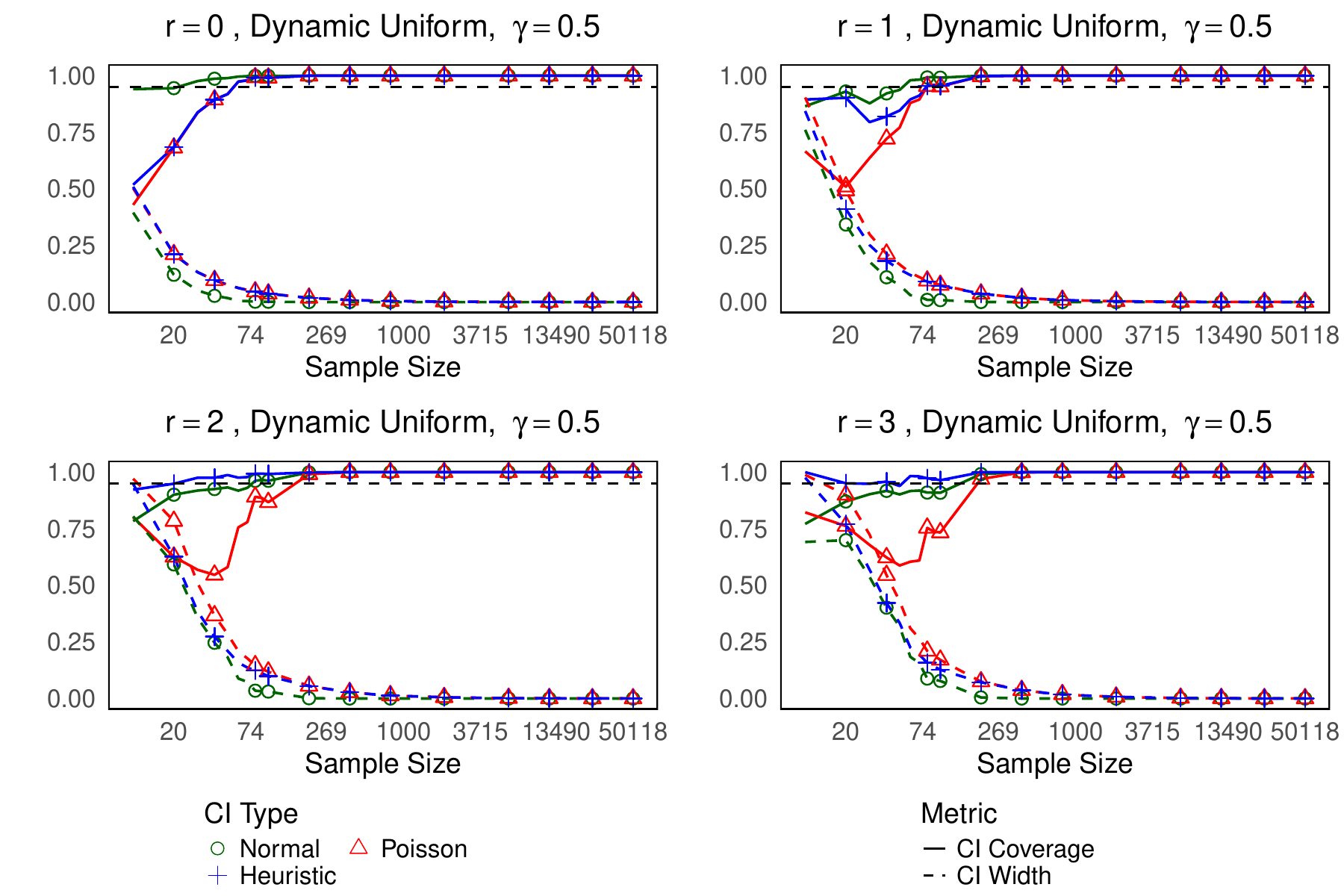}
	\caption[Results for the Dynamic Discrete Uniform.]{
		\textbf{(a)} $\gamma = 0.5$. 
		Results for the Dynamic Discrete Uniform. Each panel shows the coverage proportions and the mean widths for the three $95\%$ confidence intervals for different choices of $r$ and $\gamma$. 
		These results are based on $N = 5000$ replications. 
		The sample size on the $x$-axis is presented on a log (base 10) scale. 
		The horizontal dashed line indicates the nominal level of $0.95$.}
	\label{fig:dynamic_uniform}
\end{figure}

\begin{figure}[t]\ContinuedFloat
	\centering
	\includegraphics[width=\linewidth]{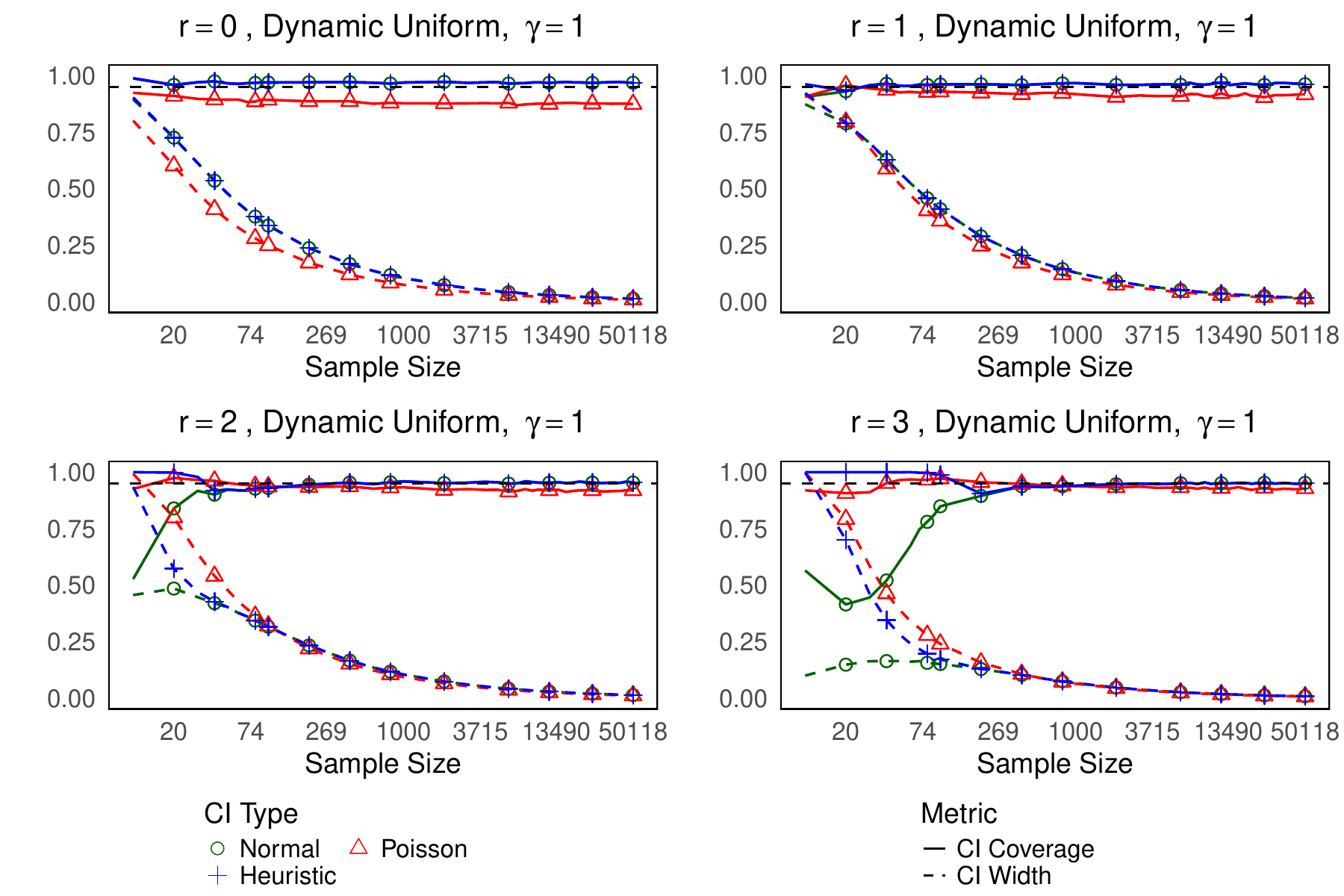}
	\caption[]{\textbf{(b)} $\gamma = 1$. Results for the Dynamic Discrete Uniform, continued.}
\end{figure}

\begin{figure}[t]\ContinuedFloat
	\centering
	\includegraphics[width=\linewidth]{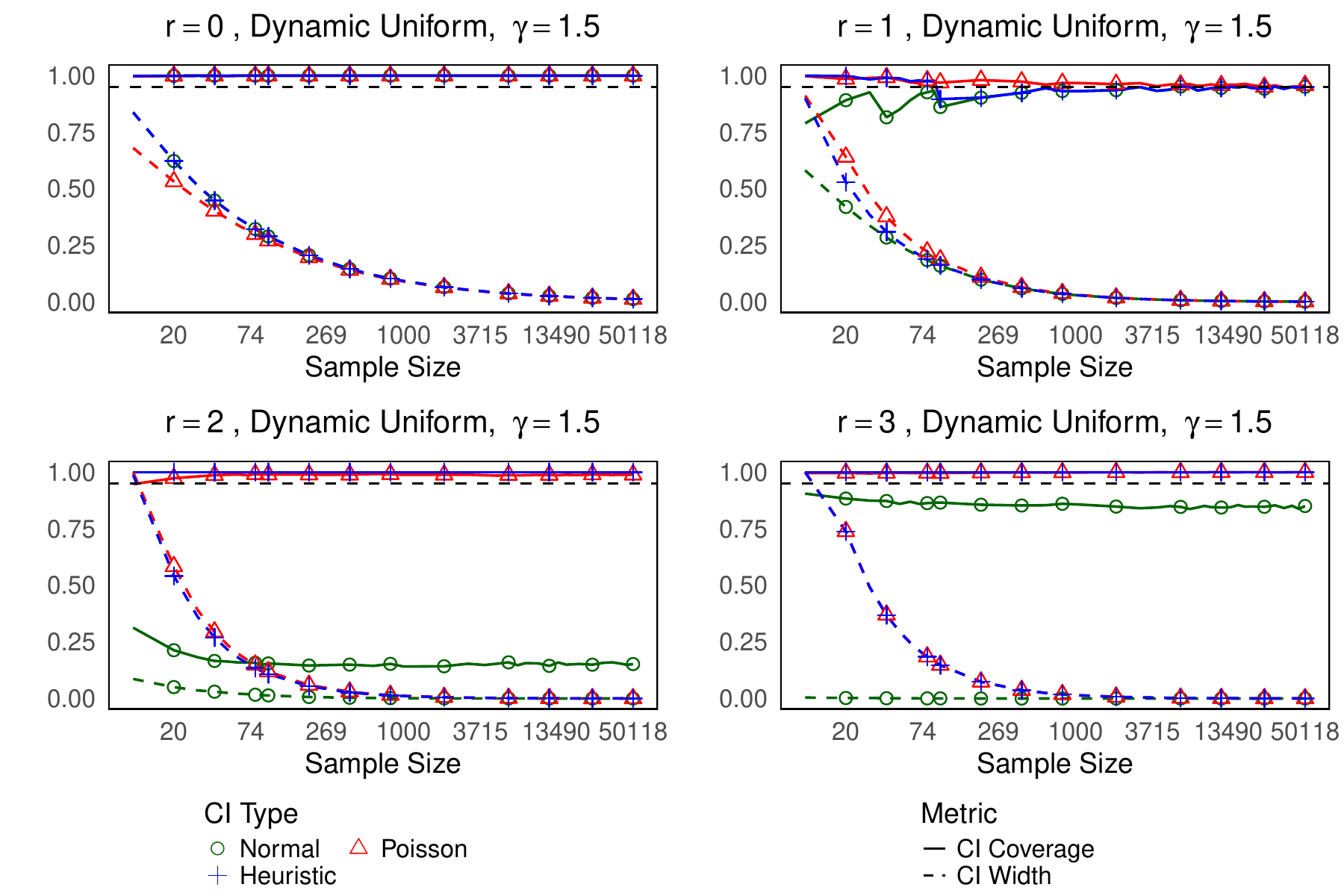}
	\caption[]{\textbf{(c)} $\gamma = 1.5$. Results for the Dynamic Discrete Uniform, continued.}
\end{figure}

To better understand the dynamic situation, we performed a simulation study for this case. The results are given in Figure~\ref{fig:dynamic_uniform}(a)--(c). The simulations were performed in a manner similar to the fixed discrete uniform case, except that now, instead of fixing $K$, we fix $\gamma$ and then choose $K$ dynamically depending on $n$. We can see that the Heuristic CI again has the best overall performance in terms of coverage. However, its performance is slightly worse than the Normal CI for small sample sizes when $\gamma=0.5$ and $r=0,1$.

Based on the theoretical results, we have asymptotic normality when $r=0$, $\gamma=1$ and when $r=1$, $\gamma=1.5$. In these cases, we can see that, for larger sample sizes, the Normal CI is close to the $0.95$ line. It should be noted that the Poisson CI also performs well here. This may be related to the fact that the normal distribution provides a good approximation to the Poisson when the mean is large, but this requires further study. We have asymptotic Poissonity when $r=2$ and $\gamma=1.5$. We can see that the Poisson CI works well in this case, while the Normal CI has very poor performance. In this case, the mean of the asymptotic Poisson distribution is $1/6$ and the normal approximation to the Poisson will not work well.

We now turn to the situations where $s_{r,n}\to 0$. For $\gamma=0.5$ we can observe that all three CIs converge to $1$ and that the widths get very close to $0$. This is in keeping with what we saw for the fixed discrete uniform and seems to be for similar reasons. The situation is very different when $r=3$ and $\gamma=1.5$. Here, the Poisson CI has very high coverage, but the Normal CI does not. However, we conjecture that this situation should actually be similar to that of $\gamma=0.5$, but that we would need much larger sample sizes to see this. Our justification is that, in this case, $s_{r,n}\to 0$ at a much slower rate. To check this conjecture we would need to perform simulations with much larger sample sizes. Due to computational constraints, we are only able to obtain preliminary results. For sample sizes $n=10^7$ and $n=10^8$, we simulated $100$ replications. The results are summarized in Table \ref{table: Table}. We can see that the coverage of the Normal CI does seem to approach $1$, but this requires further study. We note that, in all $200$ of these simulations, we observed $T_{3,n}=0$. 

\begin{table}
	\centering
		\begin{tabular*}{\textwidth}{@{\extracolsep{\fill}} cccc }
			\hline
			CI & $n$ & Coverage &  Mean Width\\
			\hline
			Normal& $10^7$ & 82/100  & $0$ \\ 
			Poisson & $10^7$ & 100/100& $1.476\times10^{-6}$ \\ 
			Heuristic & $10^7$ & 100/100& $1.476\times10^{-6}$ \\ 
			Normal& $10^8$ & 93/100  & 0 \\ 
			Poisson & $10^8$ & 100/100&  $1.476\times10^{-7}$\\
			Heuristic & $10^8$ & 100/100&  $1.476\times10^{-7}$\\
			\hline
		\end{tabular*}
	\vspace{4pt}
	\caption{Results for Dynamic Uniform with $\gamma=1.5$ and $r=3$ for large sample sizes $n$. These are based on $N=100$ replications.}
	\label{table: Table}
\end{table}

We conclude this discussion by considering the situations for which theoretical results are not known. For $r=0$ and $\gamma=1.5$, all three CIs seem to perform well, and they essentially give $100\%$ coverage, although with fairly wide CIs. These results do not suggest that asymptotic normality or asymptotic Poissonity hold, but they do not preclude it either. On the other hand, when $\gamma=1$ and $r=1,2,3$, the situations look very much like what we would expect if asymptotic normality held. We leave the problem of verifying this theoretically as an important direction for future work.

\subsection*{Geometric Distribution}

\begin{figure}[t]
	\centering
	\includegraphics[width=\linewidth]{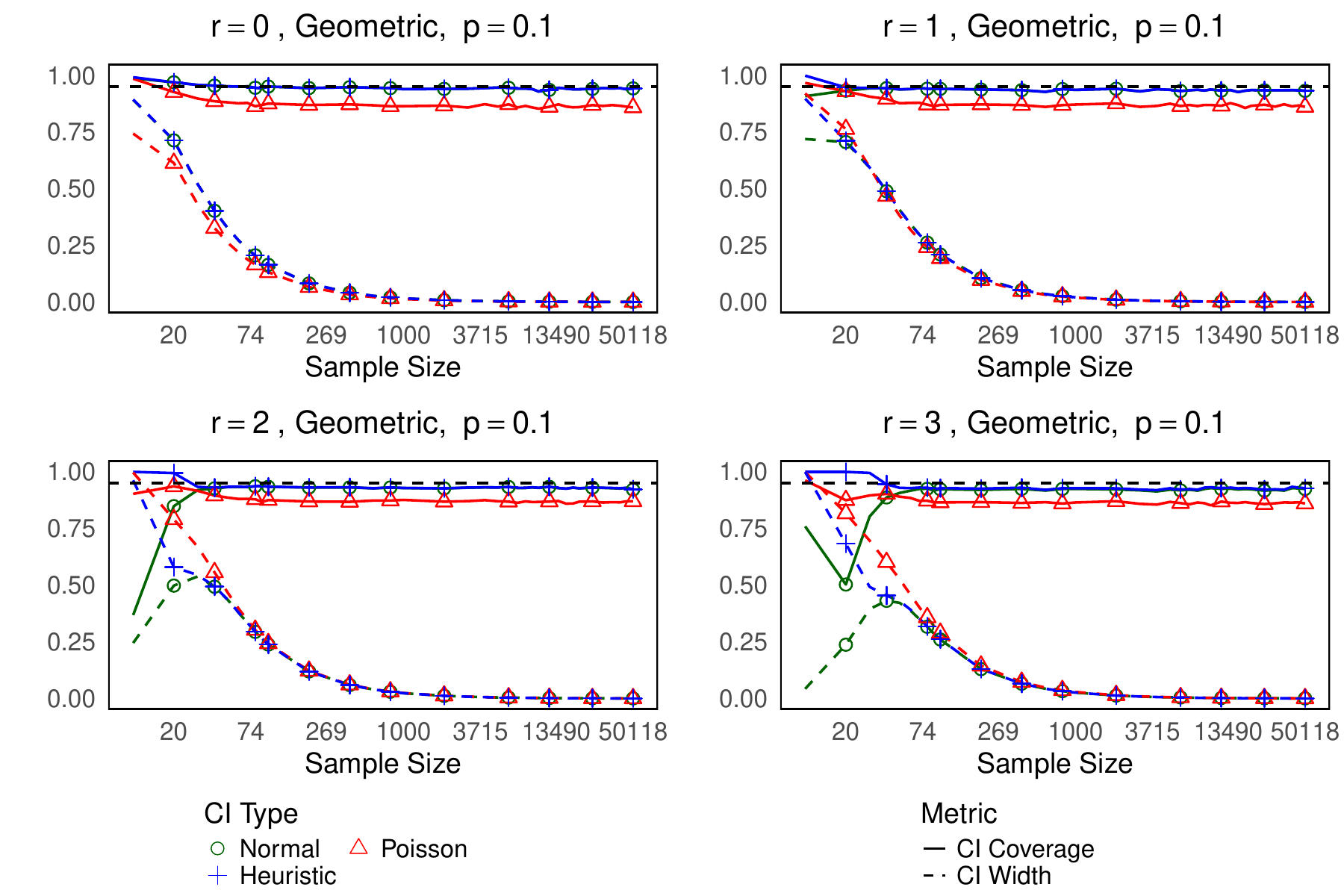}
	\caption[Results for the Geometric Distribution.]{
		\textbf{(a)} $p = 0.1$. 
		Results for the Geometric Distribution. Each plot gives the coverage proportions and the mean widths for the three $95\%$ CIs. The plots are based on $N=5000$ replications. The sample size on the $x$-axis is presented on a log (base 10) scale. The horizontal dashed line indicates the nominal level of $0.95$.}
	\label{fig:geometric}
\end{figure}

\begin{figure}[t]\ContinuedFloat
	\centering
	\includegraphics[width=\linewidth]{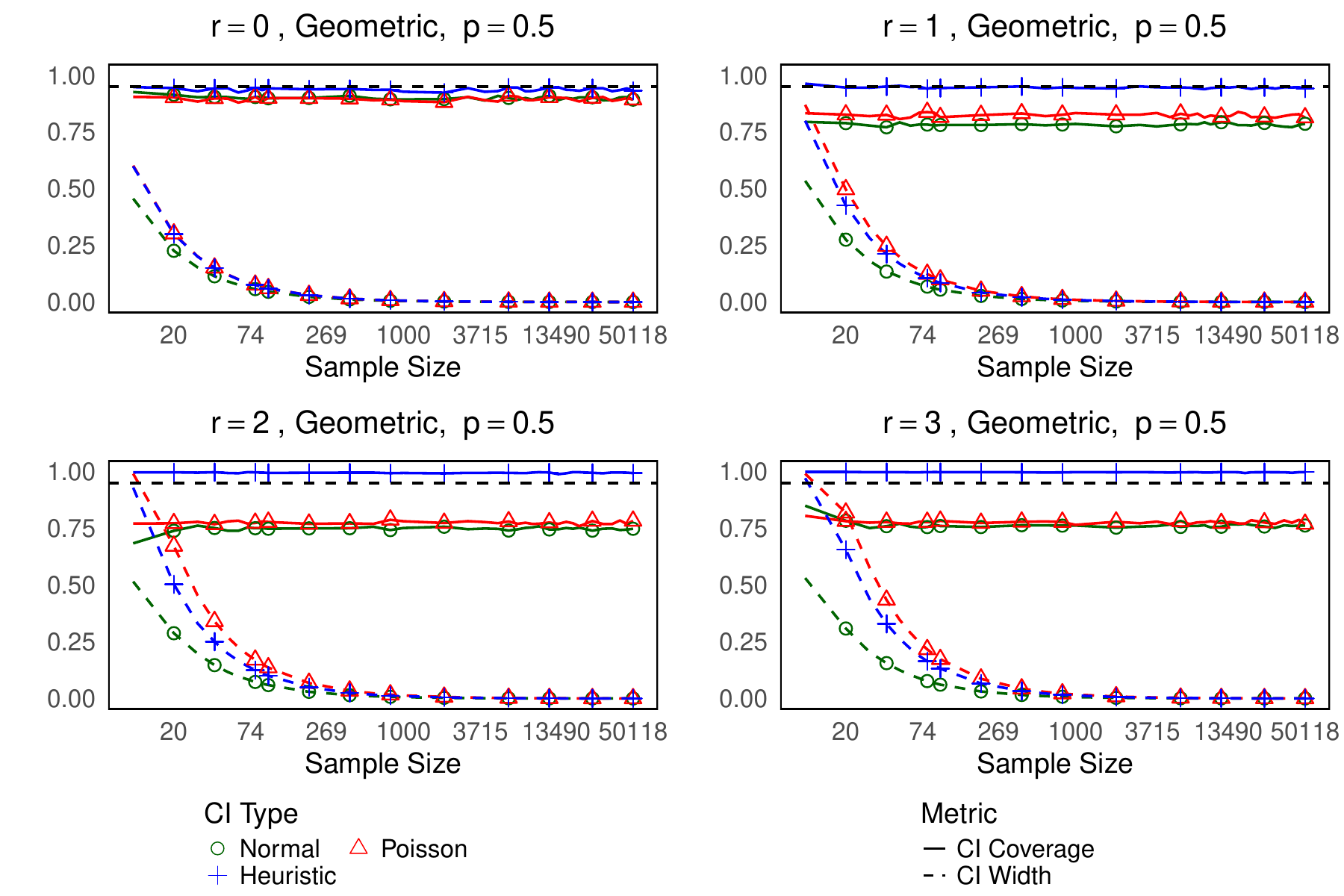}
	\caption[]{\textbf{(b)} $p = 0.5$. Results for the Geometric Distribution, continued.}
\end{figure}

\begin{figure}[t]\ContinuedFloat
	\centering
	\includegraphics[width=\linewidth]{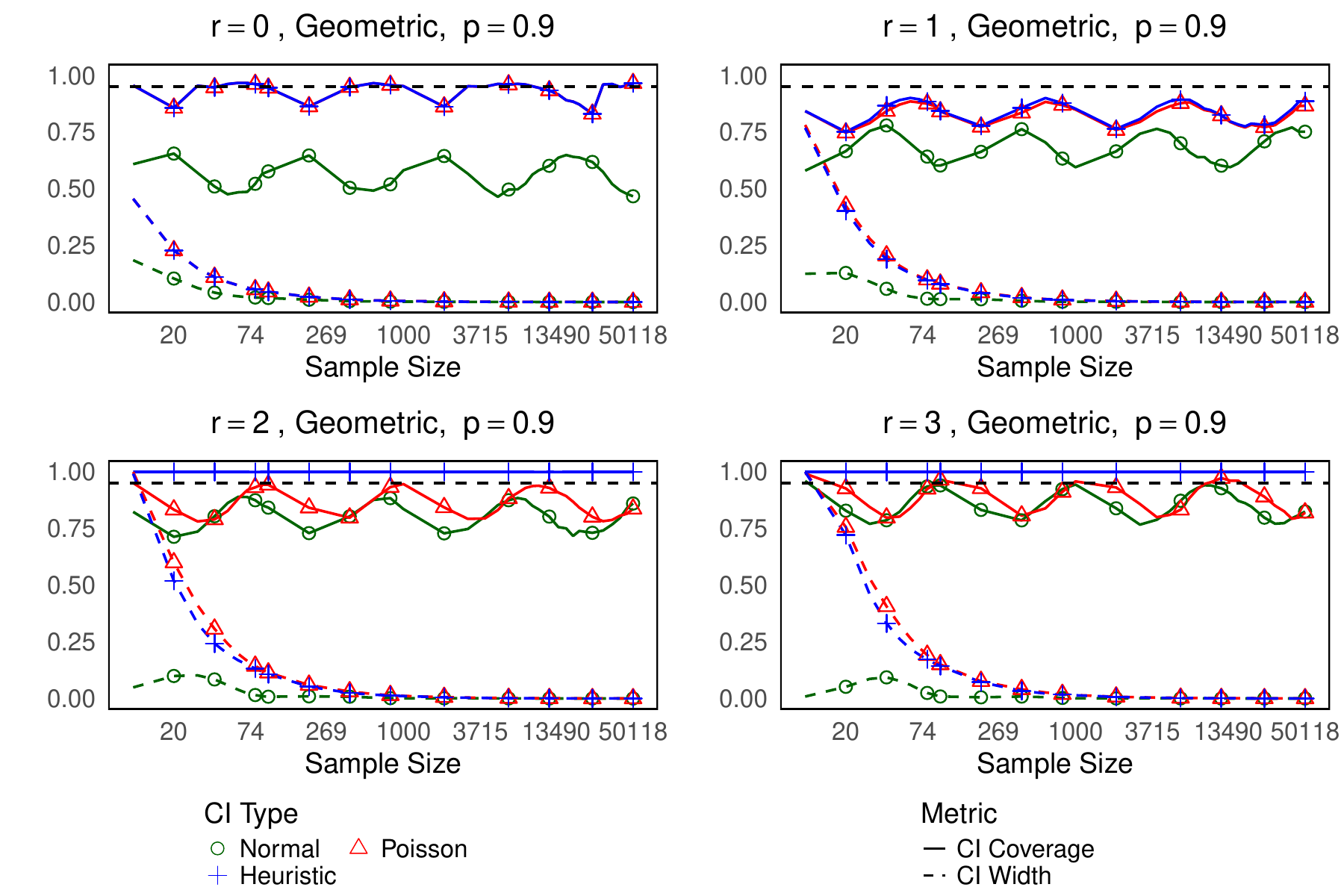}
	\caption[]{\textbf{(c)} $p = 0.9$. Results for the Geometric Distribution, continued.}
\end{figure}

\begin{figure}[t]\ContinuedFloat
	\centering
	\includegraphics[width=\linewidth]{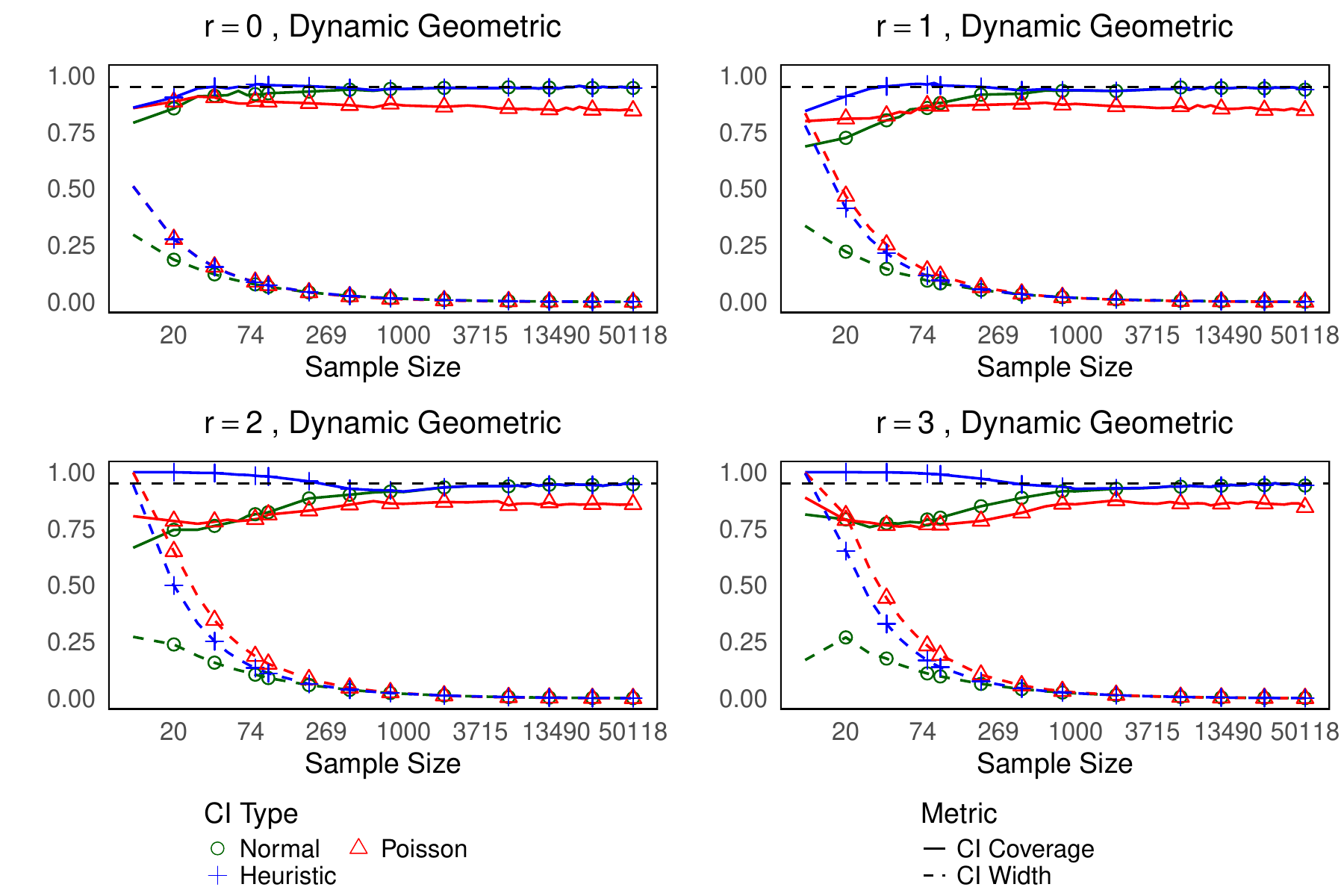}
	\caption[]{\textbf{(d)} Dynamic Geometric distribution. Results for the Geometric Distribution, continued.}
\end{figure}

The geometric distribution has a pmf given by
\begin{equation}\label{eq: fixed geo}
	p_\ell = p(1-p)^{\ell-1}, \ \ \ell = 1, 2, 3, \dots,
\end{equation}
where $p \in (0,1)$ is a parameter. The following is an immediate consequence of the slightly more general Proposition \ref{prop: norm for dyn geo} given below.

\begin{prop}
	Let $a=-1/\log(1-p)$. We have
	\begin{eqnarray}
		&& 2\max\{0,a(r+1) -(r+1)^{r+1}e^{-r-1} - (r+2)^{r+2}e^{-r-2}\}\nonumber \\
		&&\qquad \le \liminf_{n\to\infty} s^2_{r,n}
		\le \limsup_{n\to\infty} s^2_{r,n}\nonumber\\
		&&\qquad \le 2a(r+1) +2(r+1)^{r+1}e^{-r-1} + 2(r+2)^{r+2}e^{-r-2}. \label{eq: bounds for fixed geo}
	\end{eqnarray}
\end{prop}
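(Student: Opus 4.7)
The statement is flagged as an immediate consequence of the forthcoming Proposition~\ref{prop: norm for dyn geo} on the dynamic geometric, so the cleanest strategy is to defer to that result and specialize by taking the parameter $p$ constant in $n$. Here I outline the direct approach for the fixed-$p$ case, which is the cleanest way to see where the stated bounds come from.

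First I reduce $s_{r,n}^2$ to two geometric Riemann sums. Using $(r+1+t)t^{r+1} = (r+1)t^{r+1} + t^{r+2}$, write
\[
s_{r,n}^2 = \tfrac{r+1}{r!}A_{r+1,n} + \tfrac{1}{r!}A_{r+2,n}, \qquad A_{k,n} := \sum_{\ell=1}^{\infty}(np_\ell)^k e^{-np_\ell}.
\]
For the geometric distribution, $np_\ell = np \cdot e^{-(\ell-1)/a}$, so the points $u_\ell := \log(np_\ell)$ are equally spaced with step $-1/a$, and $A_{k,n} = \sum_\ell g_k(u_\ell)$ is a Riemann sum with step $\Delta = 1/a$ of the non-negative unimodal function $g_k(u) := e^{ku - e^u}$, whose unique maximum is $M_k = k^k e^{-k}$, attained at $u = \log k$.

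Next I extract the main term via the integral. The substitution $t = e^u$ gives
\[
\int_{-\infty}^{\log(np)} g_k(u)\,du = \int_0^{np} t^{k-1} e^{-t}\,dt \longrightarrow (k-1)! \quad (n\to\infty),
\]
so the Riemann-sum approximation predicts $A_{k,n} \sim a(k-1)!$ and hence
\[
s_{r,n}^2 \sim \tfrac{r+1}{r!}\cdot a\,r! + \tfrac{1}{r!}\cdot a(r+1)! = 2a(r+1),
\]
which is the leading contribution to both the $\limsup$ and the $\liminf$. For the error I invoke a standard estimate: for a non-negative unimodal integrable function $g$ with peak $M$ and uniform step $\Delta$, splitting at the mode and telescoping on each monotone branch gives $\lvert \Delta \sum_\ell g(u_\ell) - \int g\,du\rvert \le 2\Delta M$. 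Applying this with $\Delta = 1/a$ and $M = M_k$, together with the fact that the truncation $\int_{np}^{\infty} t^{k-1}e^{-t}\,dt$ is $o(1)$ as $n \to \infty$, produces error bounds on each $A_{k,n}$ that, after combining with the coefficients $(r+1)/r!$ and $1/r!$, yield the additive terms $2(r+1)^{r+1}e^{-r-1}$ and $2(r+2)^{r+2}e^{-r-2}$ appearing in \eqref{eq: bounds for fixed geo}. The lower bound is symmetric, and the $\max\{0,\cdot\}$ merely records $s_{r,n}^2 \ge 0$.

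The principal obstacle is the constant-chasing in the error step. Matching the clean coefficients $2(r+1)^{r+1}$ and $2(r+2)^{r+2}$, rather than expressions carrying residual factorial factors from the $(r+1)/r!$ and $1/r!$ weights, requires carefully pairing the unimodal-Riemann-sum telescopings for $g_{r+1}$ and $g_{r+2}$ with those weights so that the factorials cancel rather than accumulate; a naive application of the $2\Delta M$ bound at the level of $A_{k,n}$ alone produces the right shape but slightly loose constants, and the sharp form is what ties the argument to the more general Proposition~\ref{prop: norm for dyn geo}. The truncation contribution from restricting to $\ell \ge 1$ is exponentially small in $np$ and therefore drops out of both the $\limsup$ and the $\liminf$.
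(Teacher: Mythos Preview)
Your approach is essentially the paper's: its Lemma~\ref{lemma: asym s finite} bounds $\sum_{\ell\ge 1} h_{n,\eta}(\ell)$ by $\int_1^\infty h_{n,\eta}(x)\,\rd x \pm 2\max_x h_{n,\eta}(x)$ (exactly the unimodal Euler--Maclaurin estimate you invoke), evaluates the integral via the change of variables $t=nf_n(x)$ as $a_n\int_0^{n(e^{1/a_n}-1)}t^{\eta-1}e^{-t}\,\rd t\to a\Gamma(\eta)$, notes $\max h_{n,\eta}=g_\eta(\eta)=\eta^\eta e^{-\eta}$, and then combines $\eta=r+1$ and $\eta=r+2$ with the same weights $(r+1)/r!$ and $1/r!$ that you use. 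As for your ``principal obstacle'': the paper does no extra constant-chasing beyond this---after the per-$\eta$ bounds it simply states that \eqref{eq: bounds for fixed geo} ``follows immediately,'' so the weighted combination you already wrote down is all that is actually used.
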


This means that the known sufficient conditions for \eqref{eq: asymp normal} and \eqref{eq: asymp ratio normal} do not hold. It is not known if $\lim_{n\to\infty} s^2_{r,n}$ exists, although known results about a related quantity lead us to conjecture that it does not, see Theorem 3 in \cite{Zhang:2018}. Even if it does exist, the known sufficient conditions for \eqref{eq: asymp Pois} do not hold, see the discussion just below Theorem 2.3 in \cite{Chang:Grabchak:2023}.

The simulations were performed in a similar manner to those for the discrete uniform distribution. We considered the parameter values: $p=0.1, 0.5, 0.9$, and the results are summarized in Figures~\ref{fig:geometric}(a)--(c). We can see that the widths converge to $0$, but that the coverage proportions do not approach $0.95$. Instead, they appear to be either constant or, in the case of $p=0.9$, they seem to oscillate around a fixed value. It may be interesting to note that, in the situations where they oscillate, they seem to do so with a period that appears constant on a log scale. We do not know why this is, but it suggests an interesting direction for future work.

In all cases, the Heuristic CI seems to work very well. We can see two situations. In the first, the Heuristic CI pretty much always selects one method. This happens when $p=0.1$ with $r=0,1,2,3$ and when $p=0.9$ with $r=0,1$. In all of these cases, the method that is selected is the best performing one. It should be noted that this best method is not always the same one. Specifically, it is the Normal CI when $p=0.1$ and the Poisson CI when $p=0.9$.

The other situation, which happens in all of the remaining cases, sees the Heuristic CI significantly outperform the other two. For instance, when $r=2$ and $p=0.5$, the Normal and Poisson CIs each have about $75\%$ coverage, while the Heuristic CI is close to $100\%$. Clearly, in these cases, the Heuristic CI alternates between the other two, selecting the better CI for the given simulation.

Even though we do not have asymptotic normality or asymptotic Poissonity, we can see that for $p=0.1$, the Normal CI seems to have coverage close to $0.95$. This can, again, be explained by considering a dynamic situation, where the distribution changes with the sample size. Specifically, assume that the parameter $p=p_n$ depends on the sample size $n$. We will show that we get asymptotic normality if $p_n\to0$ at an appropriate rate. This explains why the Normal CI works well when $p$ is small.

To define the dynamic geometric distribution, let $\{p^{(n)}\}$ be a sequence of real numbers with $0<p^{(n)}<1$, let $a_n = -1/\log(1-p^{(n)})$, and note that $p^{(n)} = 1-e^{-1/a_n}$. We consider the sequence of geometric distributions, where the $n$th distribution has pmf
$$
p_{\ell}^{(n)}= p^{(n)}(1-p^{(n)})^{\ell-1}=\left(e^{1/a_n}-1\right)e^{-\ell/a_n}\ \ \ell=1,2,\dots.
$$
The following result is proved in Appendix \ref{eq: proof prop geo}.

\begin{prop}\label{prop: norm for dyn geo}
	1. If $a_n \rightarrow \infty$ and $a_n/n \rightarrow 0$, then both \eqref{eq: asymp normal} and \eqref{eq: asymp ratio normal} hold.\\
	2. If $a_n\to a\in[0,\infty)$, then \eqref{eq: bounds for fixed geo} holds.
\end{prop}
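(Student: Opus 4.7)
The plan is to compute $s_{r,n}^2 = \sum_{\ell\geq 1}(r+1+np_\ell^{(n)})(np_\ell^{(n)})^{r+1}e^{-np_\ell^{(n)}}/r!$ via an integral approximation. Setting $c_n := n(e^{1/a_n}-1)$ so that $np_\ell^{(n)} = c_n e^{-\ell/a_n}$, the substitution $x = np_\ell^{(n)}$ gives $\ell = a_n(\log c_n - \log x)$ and $d\ell = -(a_n/x)\,dx$, which converts $\int_0^\infty (np_\ell^{(n)})^k e^{-np_\ell^{(n)}}\, d\ell$ into $a_n\int_0^{c_n} x^{k-1}e^{-x}\,dx$. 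I would then split
\[
s_{r,n}^2 = \frac{r+1}{r!}S_{r+1,n} + \frac{1}{r!}S_{r+2,n}, \qquad S_{k,n}:=\sum_{\ell\geq 1}\bigl(np_\ell^{(n)}\bigr)^k e^{-np_\ell^{(n)}},
\]
and treat each $S_{k,n}$ as a Riemann-type approximation to $a_n\int_0^{c_n} x^{k-1}e^{-x}\,dx$, which converges to $a_n(k-1)!$ as $c_n\to\infty$.

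For Part 2, I would bound the discretization error via unimodality. The function $\ell\mapsto (np_\ell^{(n)})^k e^{-np_\ell^{(n)}}$ is unimodal on $\mathbb{R}$, with maximum $k^k e^{-k}$ attained at $\ell^\star = a_n\log(c_n/k)$. A standard sum-integral comparison for non-negative unimodal functions then yields $|S_{k,n} - a_n(k-1)!|\leq 2k^k e^{-k} + \varepsilon_n$, where $\varepsilon_n = a_n\int_{c_n}^\infty x^{k-1}e^{-x}\,dx = o(1)$ since $c_n\to\infty$ whenever $a_n$ stays bounded. Inserting this back into the decomposition above with $k=r+1$ and $k=r+2$, and using $a_n\to a$, gives \eqref{eq: bounds for fixed geo}; the $\max\{0,\cdot\}$ on the lower side is nothing more than the trivial inequality $s_{r,n}^2\geq 0$.

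For Part 1, under $a_n\to\infty$ and $a_n/n\to 0$, the same analysis yields $s_{r,n}^2\sim 2a_n(r+1)\to\infty$. I would then verify the technical hypotheses listed in Appendix \ref{sec: cond} for \eqref{eq: asymp normal} and \eqref{eq: asymp ratio normal}. These are Lyapunov-type negligibility conditions that reduce to bounding auxiliary sums of the form $\sum_\ell\bigl(np_\ell^{(n)}\bigr)^m e^{-c\cdot np_\ell^{(n)}}$ relative to powers of $s_{r,n}^2$; applying the same change of variable shows that each such sum is of order $a_n$, and hence is $o(s_{r,n}^2)$ when $a_n\to\infty$. The condition $a_n/n\to 0$ enters precisely because it guarantees $c_n\to\infty$, so that the Gamma integrals are not truncated in the limit. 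I expect the main obstacle to be the careful bookkeeping of constants in the unimodal sum-integral comparison so that the precise numbers in \eqref{eq: bounds for fixed geo} come out, together with the tedious but routine reduction of each individual hypothesis in Appendix \ref{sec: cond} to the Gamma-integral analysis developed above.
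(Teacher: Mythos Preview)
Your overall strategy coincides with the paper's: both set $c_n=n(e^{1/a_n}-1)$, use the substitution $x=np_\ell^{(n)}=c_n e^{-\ell/a_n}$ to convert the sums $S_{k,n}$ into $a_n\int_0^{c_n}x^{k-1}e^{-x}\,dx$, exploit unimodality of $\ell\mapsto(np_\ell^{(n)})^k e^{-np_\ell^{(n)}}$ for the sum--integral comparison in Part~2, and obtain $s_{r,n}^2\sim 2a_n(r+1)$ in Part~1. Your treatment of Part~2 and of the conditions $s_{r,n}\to\infty$, $s_{r,n}/\sqrt n\to 0$ is essentially what the paper does.

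There is, however, a genuine gap in your handling of the Lindeberg-type condition \eqref{eq: Lindeberg type condition}. You write that the auxiliary sums are ``of order $a_n$, and hence $o(s_{r,n}^2)$ when $a_n\to\infty$'' --- but $s_{r,n}^2$ is \emph{itself} of order $a_n$, so an $O(a_n)$ bound is useless. Indeed the untruncated sum $\sum_\ell e^{-np_\ell^{(n)}}(np_\ell^{(n)})^{r+2}\sim a_n(r+1)!$ is comparable to $s_{r,n}^2$, not negligible against it. The whole point of \eqref{eq: Lindeberg type condition} is the indicator $1_{[np_\ell^{(n)}\ge\epsilon s_{r,n}]}$: since $s_{r,n}\to\infty$, this truncation forces the corresponding Gamma integral to become $a_n\int_{\epsilon s_{r,n}}^\infty e^{-t}t^{r+1}\,dt$, and it is the vanishing of this \emph{tail} integral (not the full $\Gamma(r+2)$) that gives $o(a_n)=o(s_{r,n}^2)$. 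The paper carries this out directly by locating the threshold $x_n^{**}$ with $nf_n(x_n^{**})=\epsilon s_{r,n}$ and bounding the truncated sum by the truncated integral. Alternatively, your sketch can be repaired by the standard Lyapunov device: on the set $\{np_\ell^{(n)}\ge\epsilon s_{r,n}\}$ insert $1\le(np_\ell^{(n)}/(\epsilon s_{r,n}))^\delta$ for some $\delta>0$, turning the left side of \eqref{eq: Lindeberg type condition} into $\epsilon^{-\delta}s_{r,n}^{-2-\delta}\,S_{r+2+\delta,n}=O(a_n^{-\delta/2})\to 0$. Either way, the truncation must be used explicitly; it is not ``routine bookkeeping''.
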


Simulations for the dynamic geometric distribution are given in Figure~\ref{fig:geometric}(d). Here we take $a_n = \sqrt{n} / 4$, which guarantees asymptotic normality. We can clearly see this illustrated in the plots.

\subsection*{Discrete Pareto Distribution}

We begin by recalling the continuous Pareto distribution. While there are several parametrizations, we consider the one whose probability density function (pdf) is given by
$$
f(x) = \alpha x^{-\alpha-1}, \ \ x>1,
$$
where $\alpha>0$ is a parameter. We can simulate from this distribution by taking $Y=U^{-1/\alpha}$, where $U$ has a (continuous) uniform distribution on $(0,1)$. We say that a random variable $X$ has a Discrete Pareto Distribution if $X\eqd\lfloor Y\rfloor$, where $\eqd$ denotes equality in distribution. In this case, the pmf is given by
$$
p_\ell =  \ell^{-\alpha} - \left(\ell+1 \right)^{-\alpha}, \ \ \ell = 1, 2, 3, \dots.
$$
It is readily checked that 
$$
p_\ell \sim \alpha \ell^{-\alpha-1} \mbox{ as } \ell\to\infty.
$$
From here, Proposition 4.1 in \cite{Chang:Grabchak:2023} implies that \eqref{eq: asymp normal} and \eqref{eq: asymp ratio normal} hold for each $\alpha>0$. Thus, we always have asymptotic normality, and there is no need to consider a dynamic situation in this case.

Our simulation results are presented in Figures~\ref{fig:pareto}(a)--(c). We considered three choices for the parameter $\alpha=0.5, 1.5, 2$. From the plots we can see that the smaller the value of $\alpha$, the faster the coverage proportion of the Normal CI converges to $0.95$. This is in keeping with other results that suggest that Turing's estimator converges quicker for smaller values of $\alpha$, see \cite{Grabchak:Cosme:2017}. We note that the Heuristic CI has very good performance. Once we are in a regime where the Normal CI is close to convergence, the Heuristic CI follows it closely. On the other hand, prior to convergence, the Heuristic CI outperforms the Normal CI in terms of the coverage proportion, sometimes significantly so. However, this is at the expense of having wider CIs.

\begin{figure}[t]
	\centering
	\includegraphics[width=\linewidth]{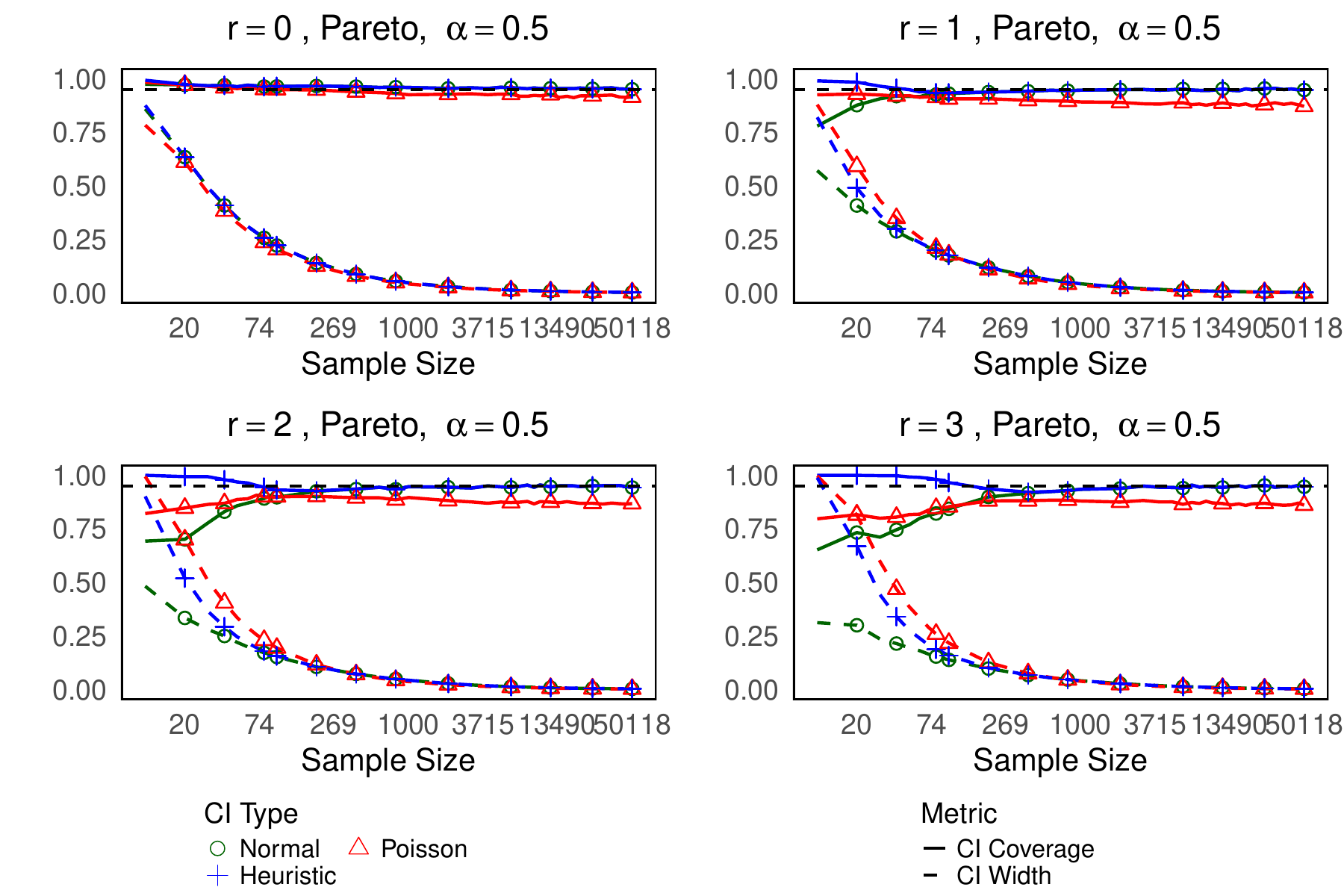}
	\caption[Results for the Discrete Pareto Distribution.]{
		\textbf{(a)} $\alpha = 0.5$. 
		Results for the Discrete Pareto Distribution. Each plot gives the coverage proportion and the mean width of three $95\%$ CIs for different choices of $r$ and $\alpha$. 
		These are based on $N=5000$ replications. The $x$-axis for sample size is displayed on a log (base 10) scale. The horizontal dashed line indicates the nominal level of $0.95$.}
	\label{fig:pareto}
\end{figure}

\begin{figure}[t]\ContinuedFloat
	\centering
	\includegraphics[width=\linewidth]{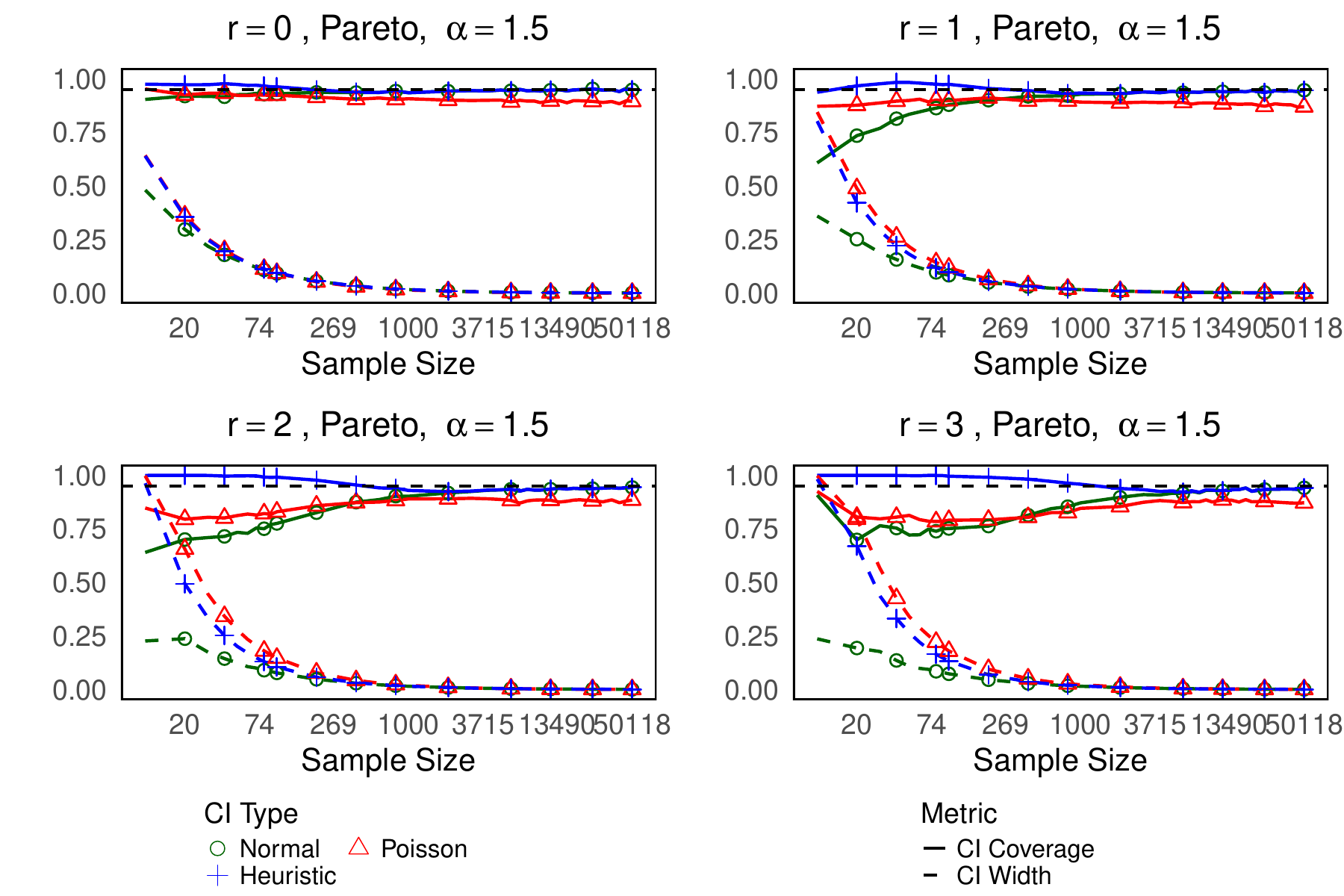}
	\caption[]{\textbf{(b)} $\alpha = 1.5$. Results for the Discrete Pareto Distribution, continued.}
\end{figure}

\begin{figure}[t]\ContinuedFloat
	\centering
	\includegraphics[width=\linewidth]{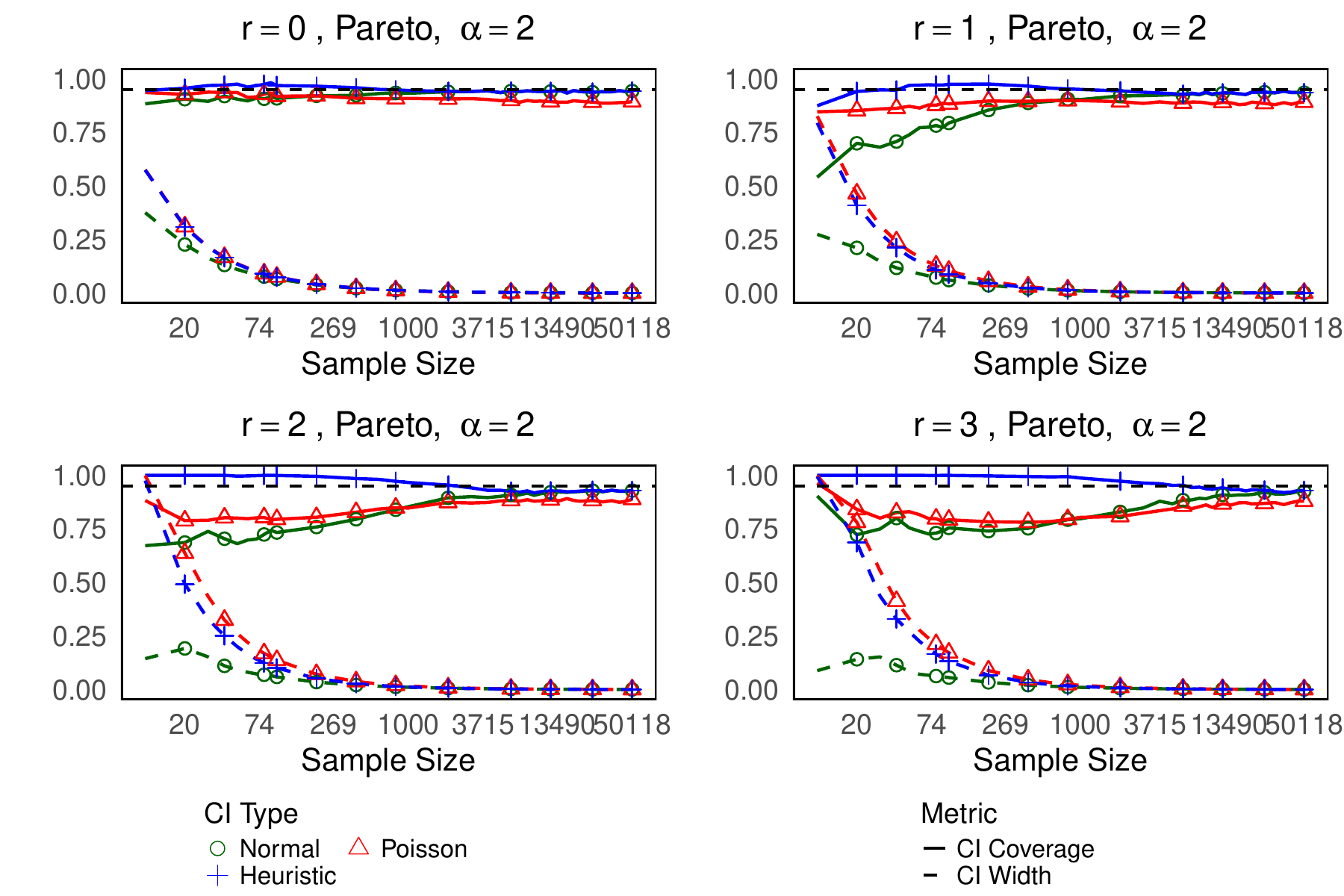}
	\caption[]{\textbf{(c)} $\alpha = 2$. Results for the Discrete Pareto Distribution, continued.}
\end{figure}

\section{CIs based on Concentration Inequalities}\label{sec: concentration ineq CIs}

In this paper, we primarily focus on CIs derived from asymptotic distributions. However, another strand of the literature considers confidence intervals constructed using concentration inequalities. In this section, we give a brief overview of these CIs and present a small simulation study comparing their performance with that of CIs based on asymptotic distributions.

Perhaps the earliest concentration inequality-based CI for $\pi_{r,n}$ is given in \cite{McAllester:Schapire:2000}. There it is shown that, for any $\alpha\in(0,1)$ and any $n>r$, we have
$$
\rP( |\pi_{r,n} - T^*_{r,n}|\le B^\mathrm{MS}_{r,n,\alpha} )\ge 1-\alpha
$$
where $T^*_{r,n}$ is the modified Turing estimator given in \eqref{eq: modified Turing} and 
$$
B^\mathrm{MS}_{r,n,\alpha}= \frac{r+2}{n-r} + \sqrt{\frac{2\log(3/\alpha)}{n}}\times\left(\frac{r+1}{1-r/n} +r + \sqrt{2r\log(3n/\alpha)} +2\log(3n/\alpha)\right).
$$
This leads to the $(1-\alpha)100\%$ CI for $\pi_{r,n}$  given by $T^*_{r,n}\pm B^\mathrm{MS}_{r,n,\alpha}$. Historically, this is an important result as it stimulated a lot of research and gave a rate of convergence. However, this interval is not useful for practical purposes since its radius tends to be very large. For instance, it is readily checked that for $\alpha=0.05$ and $r=0,1,2,$ or $3$, the radius does not go below $1$ until the sample size is well above $n=5000$. Thus, this CI is not informative unless the sample size is extremely large.

Another concentration inequality-based CI is given in \cite{Painsky:2022} and is valid only for $r\ge1$. Combining equation (15) and Theorem 5 in that paper, it follows that for any $\alpha\in(0,1)$ and $r\ge1$, we have
$$
\rP( |\pi_{r,n} - T_{r,n}|\le B^{\mathrm{Pa}}_{r,n,\alpha} )\ge 1-\alpha,
$$
where
$$
B^{\mathrm{Pa}}_{r,n,\alpha} = \sqrt{\frac{g(r)+o(1)}{n\alpha}}.
$$
Here $o(1)$ denotes a quantity (independent of $\alpha$) that approaches $0$ as $n\to\infty$ and
$$
g(r) = -\frac{1}{r+1}\left(\frac{r}{r+1}\right)^{2r} + \frac{e}{\sqrt{2\pi}} \left( \sqrt{r+1}\left(\frac{r}{r+1}\right)^{r} + \sqrt{r+2}\left(\frac{r+1}{r+2}\right)^{r+2} \right).
$$
This leads to the $(1-\alpha)100\%$ CI for $\pi_{r,n}$  given by
$$
T_{r,n}\pm \sqrt{\frac{g(r)}{n\alpha}}.
$$

The final concentration inequality-based CI is given in \cite{Favaro:Naulet:2024} and is valid only for $r=0$. Proposition 2 in that paper  states that for any $x>0$ and any $n\ge2$ we have
\begin{eqnarray*}
\rP\left( \max \{0,L_{n}(x)\} \le \pi_{0,n}  \right) \ge 1-6e^{-x} \mbox{ and } \rP\left( \pi_{0,n} \le \min\{1, U_{n}(x)\} \right) \ge 1-7e^{-x},
\end{eqnarray*}
where
\begin{eqnarray*}
L_{n}(x) &=& T_{0,n} - 2\left( \sqrt{\frac{1}{n(n-1)}} +\frac{\sqrt 2}{n}  \right) \sqrt{x C_{n}} - \left( 4 \sqrt{\frac{2}{n(n-1)}} + \frac{8+2/3}{n} +  \frac{32}{n(n-1)}\right)x\\
&&\qquad - \frac{4}{n(n-1)}  C_{n},\\
U_{n}(x) &=& T_{0,n} + 2\left( \sqrt{\frac{1}{n(n-1)}} +\frac{\sqrt 2}{n}  \right) \sqrt{x C_{n}} + \left( 4\sqrt{\frac{2}{n(n-1)}} + \frac{8+2/3}{n} \right)x \\
&&\qquad+ \frac{4\left(x+\log(n)\right)}{3n}x,
\end{eqnarray*}
and
$$
C_{n} = \sum_\ell 1_{[y_{\ell,n}\ge 1]}.
$$
Note that $C_{n}$ is the total number of distinct letters observed at least once in the sample. Using the Bonferroni inequality, we combine the above inequalities to get
$$
\rP\left( \max \{0,L_{n}(x)\} \le \pi_{0,n} \le \min\{1, U_{n}(x)\} \right) \ge 1-13e^{-x}.
$$
From here, we obtain a $(1-\alpha)100\%$ CI for $\pi_{0,n}$ as follows. First, we select $x$ such that $(1-\alpha) = 1-13e^{-x}$, or equivalently $x=\log(13/\alpha)$. Our CI is then $\left[\max \{0,L_{n}(x)\}, \min\{1, U_{n}(x)\}\right]$.

We now present the results of a small simulation study comparing the performance of the asymptotic CIs with those based on concentration inequalities. As with the asymptotic CIs, we truncate the concentration inequality-based CIs to ensure  that they stay within valid limits: if the calculated lower bound is less than $0$, we set it to $0$, and if the calculated upper bound exceeds $1$, we set it to $1$. We do not report results for the CI from \cite{McAllester:Schapire:2000}, as it is informative only for extremely large sample sizes. Similarly, in the interest of space, we only report results for the geometric and the discrete Pareto distributions, as the results for the other distributions were similar. For comparison, we use the Heuristic CI as the representative asymptotic CI.

The results are shown in Figures \ref{fig:geo_conin} and \ref{fig:pareto_conin}. We can see that the concentration-based CIs are substantially wider than the Heuristic CI, and in some cases they have a non-informative width of $1$ even for fairly large sample sizes. Moreover, their empirical coverage is consistently close to $100\%$, indicating that they are highly conservative. This is not surprising as these intervals are derived from concentration inequalities and, thus, their theoretical coverage is only guaranteed to be at least $(1-\alpha)100\%$, with no control on how much larger the actual coverage may be.

\begin{figure}[t]
	\centering
	\includegraphics[width=\linewidth]{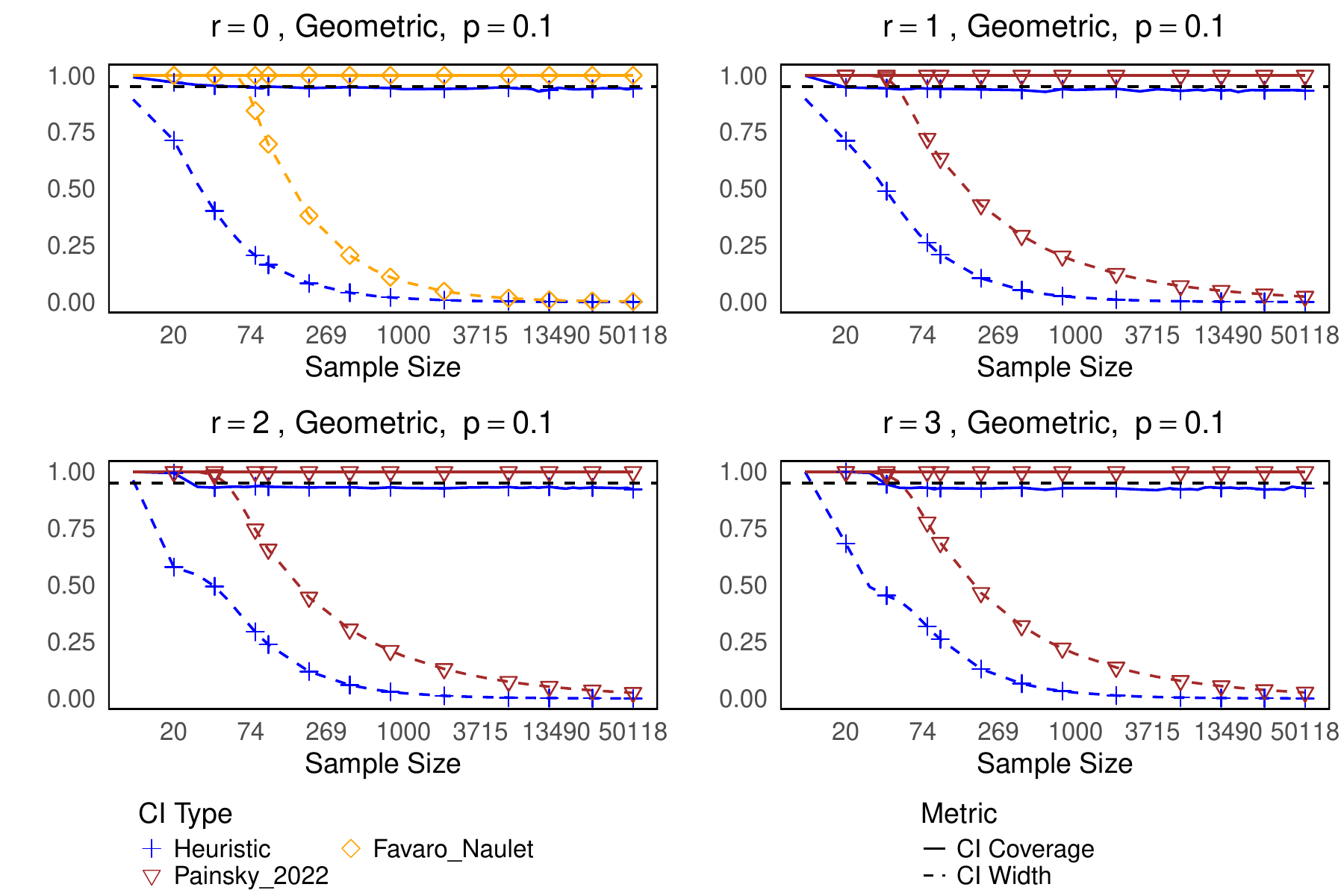}
	\caption[Results for the Geometric Distribution.]{
		\textbf{(a)} $p = 0.1$. 
		Results for the Geometric Distribution. Each plot gives the coverage proportion and the mean width of two $95\%$ CIs. These are based on $N=5000$ replications. The sample size on the $x$-axis is displayed on a log (base 10) scale. The horizontal dashed line indicates the nominal level of $0.95$.}
	\label{fig:geo_conin}
\end{figure}

\begin{figure}[t]\ContinuedFloat
	\centering
	\includegraphics[width=\linewidth]{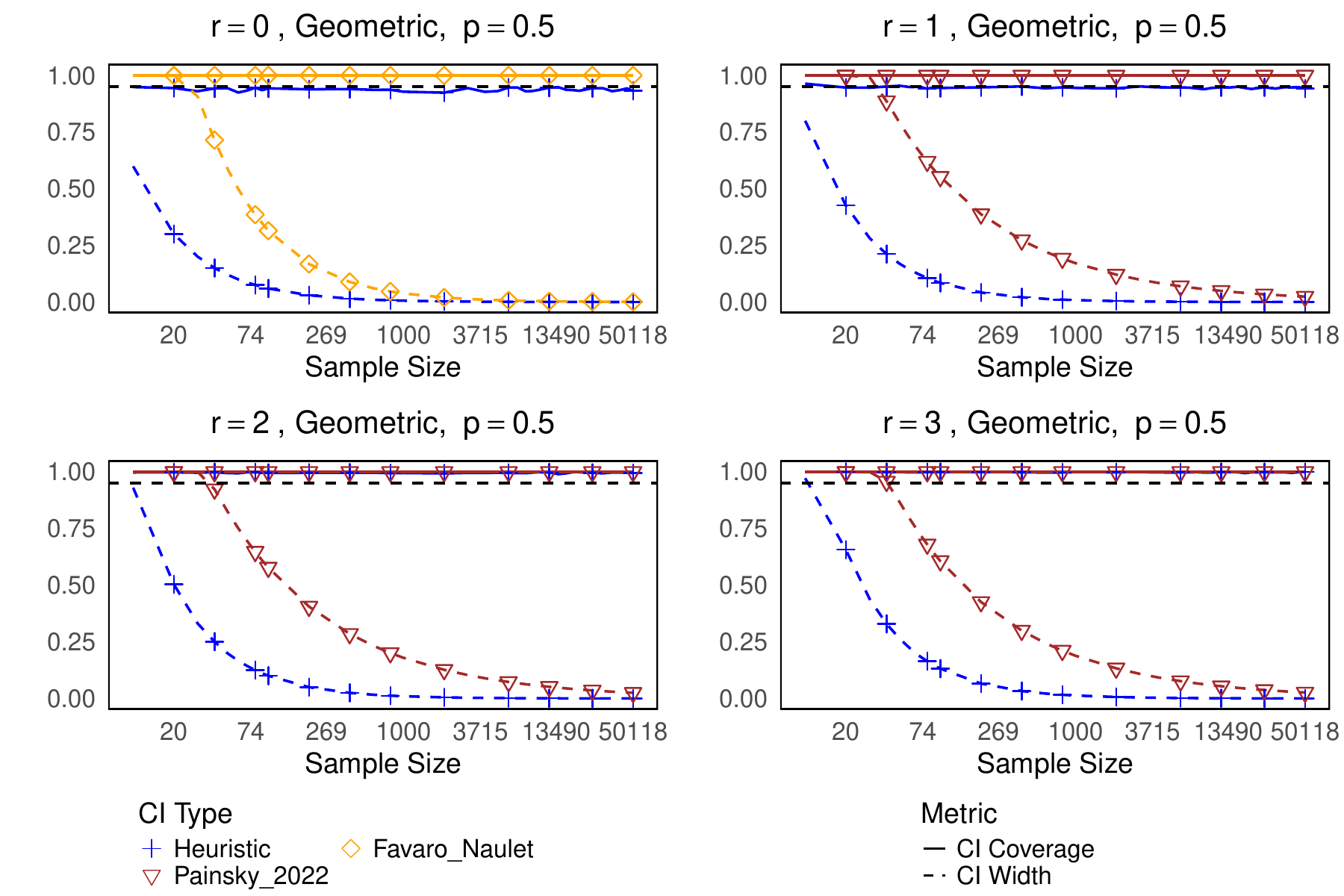}
	\caption[]{\textbf{(b)} $p = 0.5$. Results for the geometric distribution, continued.}
\end{figure}

\begin{figure}[t]\ContinuedFloat
	\centering
	\includegraphics[width=\linewidth]{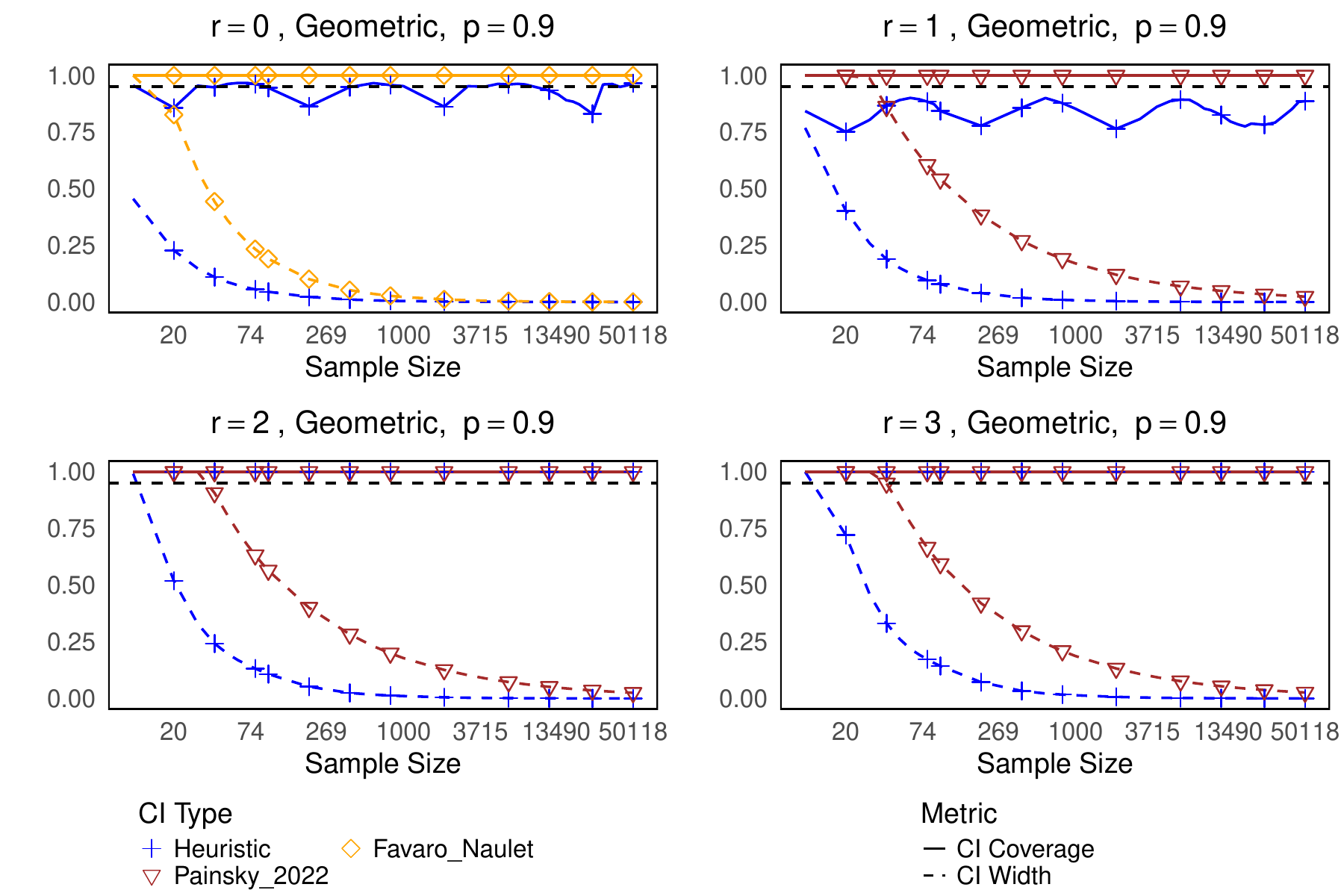}
	\caption[]{\textbf{(c)} $p = 0.9$. Results for the geometric distribution, continued.}
\end{figure}

\begin{figure}[t]\ContinuedFloat
	\centering
	\includegraphics[width=\linewidth]{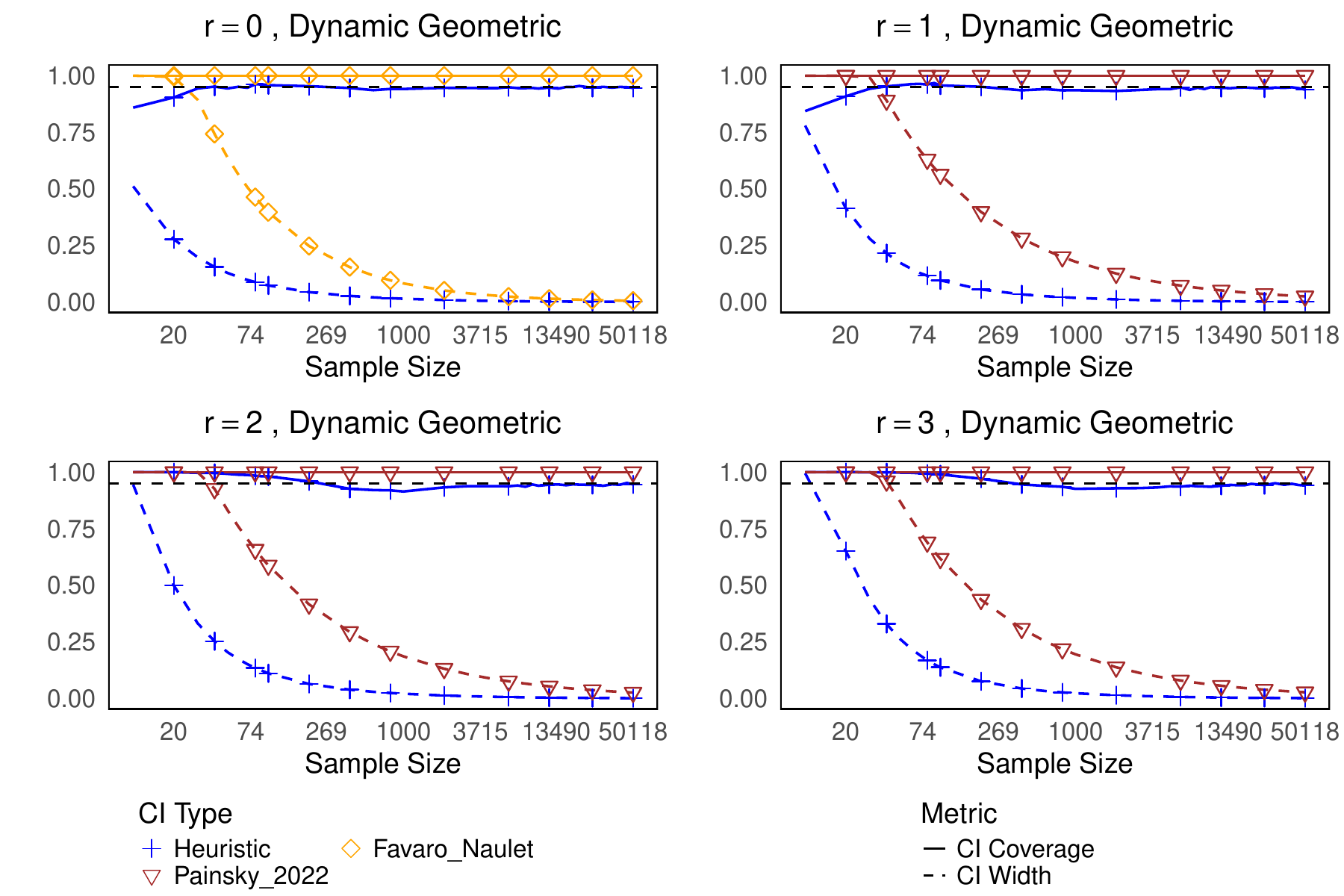}
	\caption[]{\textbf{(d)} Dynamic geometric distribution. Results for the geometric distribution, continued.}
\end{figure}

\begin{figure}[t]
	\centering
	\includegraphics[width=\linewidth]{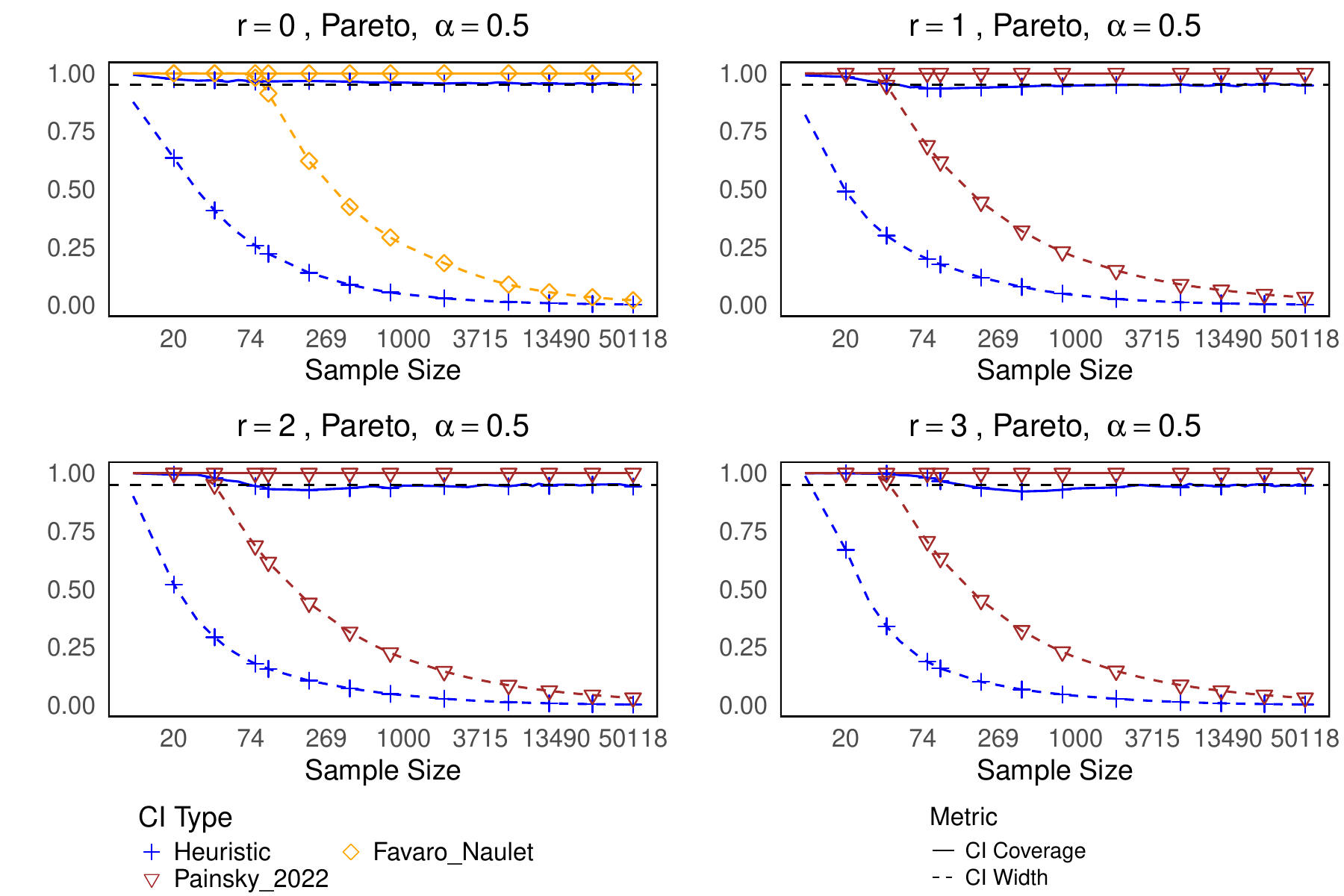}
	\caption[Results for the Discrete Pareto Distribution.]{
		\textbf{(a)} $\alpha = 0.5$. 
		Results for the Discrete Pareto Distribution. Each plot gives the coverage proportion and the mean width of two $95\%$ CIs for different choices of $r$ and $\alpha$. These are based on $N=5000$ replications. The $x$-axis for sample size is displayed on a log (base 10) scale. The horizontal dashed line indicates the nominal level of $0.95$.}
	\label{fig:pareto_conin}
\end{figure}

\begin{figure}[t]\ContinuedFloat
	\centering
	\includegraphics[width=\linewidth]{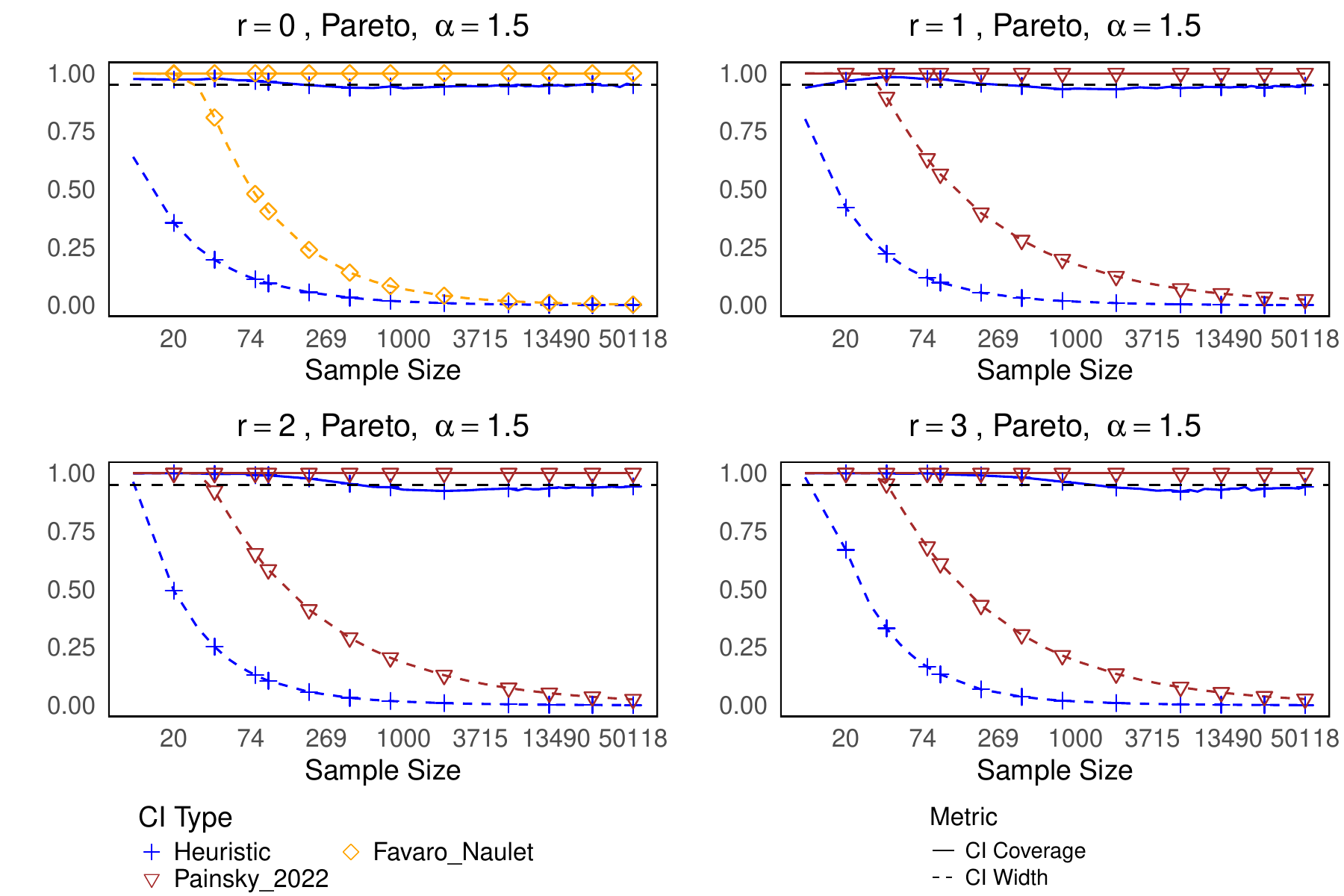}
	\caption[]{\textbf{(b)} $\alpha = 1.5$. Results for the discrete Pareto distribution, continued.}
\end{figure}

\begin{figure}[t]\ContinuedFloat
	\centering
	\includegraphics[width=\linewidth]{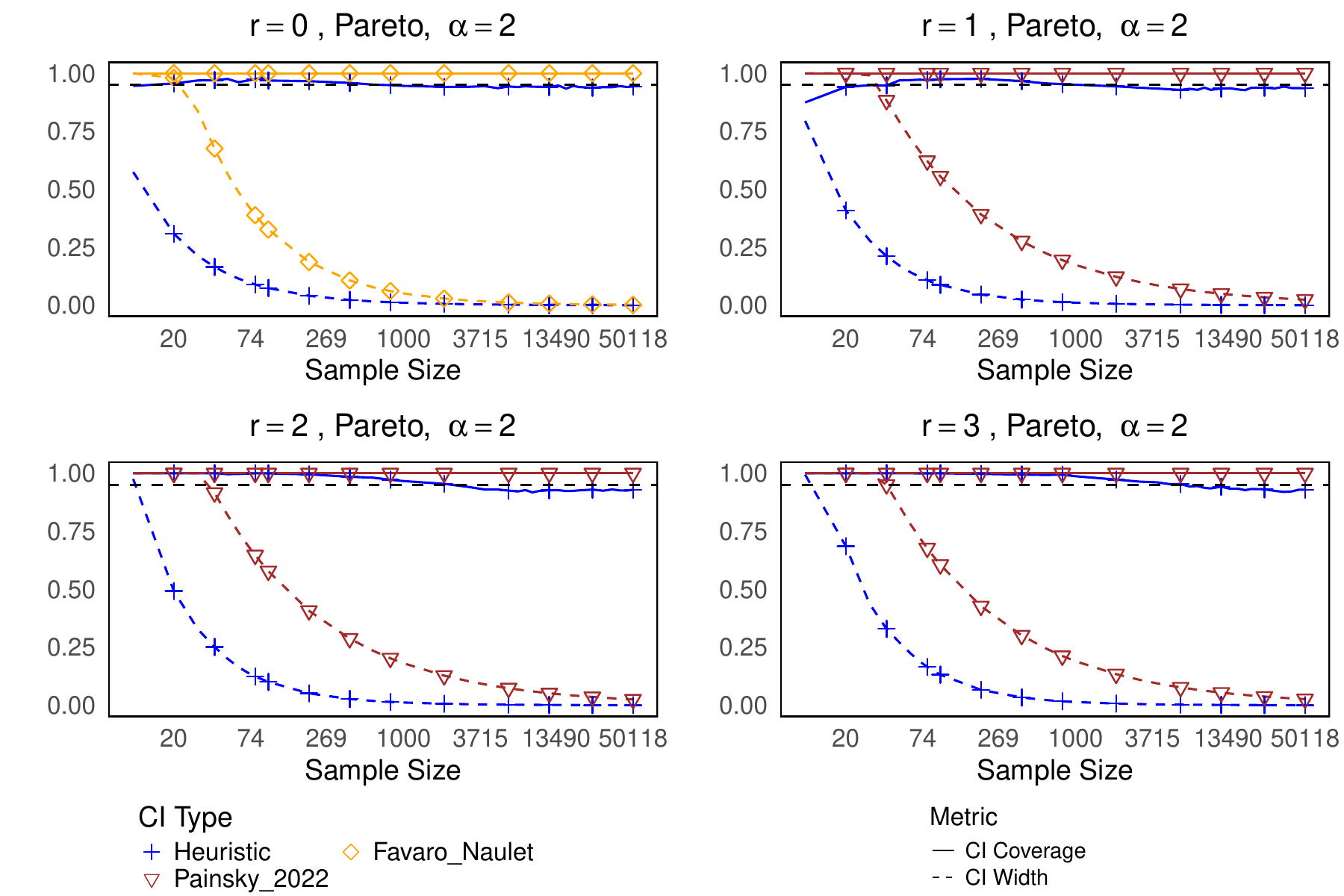}
	\caption[]{\textbf{(c)} $\alpha = 2$. Results for the discrete Pareto distribution, continued.}
\end{figure}

\section{Authorship Attribution and X (Twitter) Data}\label{sec: application}

In this section, we present an application to the problem of authorship attribution. For an overview, see, e.g., \cite{Zhang:Huang:2007}, \cite{Grabchak:Zhang:Zhang:2013}, \cite{Grabchak:Cao:Zhang:2018}, \cite{Zheng:Zheng:Kundu:2023}, and the references therein. In this context, $\mathcal A$ represents the common vocabulary consisting of all of the words that different authors can use. Associated with each author is a word-type distribution, $\mathcal P=\{p_\ell:\ell\in\mathcal A\}$, where for word $\ell\in\mathcal A$, $p_\ell$ is the probability with which the author will use $\ell$. In practice, it is usually difficult to estimate the word-type distribution. Instead, one typically estimates so-called diversity indices, which are functionals of the word-type distribution. Two of the most commonly used diversity indices are Shannon's entropy \citep{Grabchak:Zhang:Zhang:2013} and Simpson's index \citep{Grabchak:Cao:Zhang:2018}.

A common approach to authorship attribution is to consider two writing samples and to estimate one or more diversity indices for each sample. One then compares these estimated indices to see if they are statistically different from each other. Such approaches do not take into account the order in which words are written, only their relative frequencies. While some information is lost, this does not seem to be a major issue; see the discussion in \cite{Grabchak:Zhang:Zhang:2013}. We now introduce our methodology for using Turing's estimators for the problem of authorship attribution. It is motivated by, but different from, the approach of comparing diversity indices.

Let $n_1$ be the size of the first writing sample and let $n_2$ be the size of the second writing sample. We denote the first sample as the corpus and the second as the testing set. The idea is to check if the testing set has too many (or too few) words that appear rarely (or never) in the corpus. Toward this end, for the corpus, we construct $95\%$ CIs for $\pi_{r,n_1}$ for $r=0,1,2,\dots, R$, where $R<n_1$. Then, for each $r = 0, 1, 2,\dots, R$, we calculate the detecting points
\begin{align*}
	D_r = \frac{A_r}{n_2},
\end{align*}
where $A_r$ is the number of words in the testing set that are observed exactly $r$ times in the corpus. We allow for repetition. Thus, if a word appears several times in the testing set, then each instance is counted separately when calculating $A_r$. When $r=0$, then $A_0$ is just the number of words (including repetitions) in the testing set that do not appear in the corpus. Next, we compare the detecting points $D_r$ with the corresponding CI. If, for most values of $r$, $D_r$ falls inside the CI, it suggests that the writing samples are from the same author; while if most of them fall outside the CI, it suggests that the writing samples are from different authors.

To motivate this methodology, we note that $\pi_{r,n_1}$ is the probability of seeing a letter that is observed exactly $r$ times in the corpus. Thus, in a sample of size $n_2$, we expect to see $n_2\pi_{r,n_1}$ such words. It follows that
$$
n_2\pi_{r,n_1} = \rE[A_r| \mbox{the corpus}] 
$$
and hence
$$
\pi_{r,n_1} = \rE[D_r| \mbox{the corpus}] \approx D_r.
$$

\begin{figure}[ht]
	\begin{center}
		\includegraphics[width=0.8\linewidth,trim={0.6cm 4.0cm 0.65cm 4.0cm},clip]{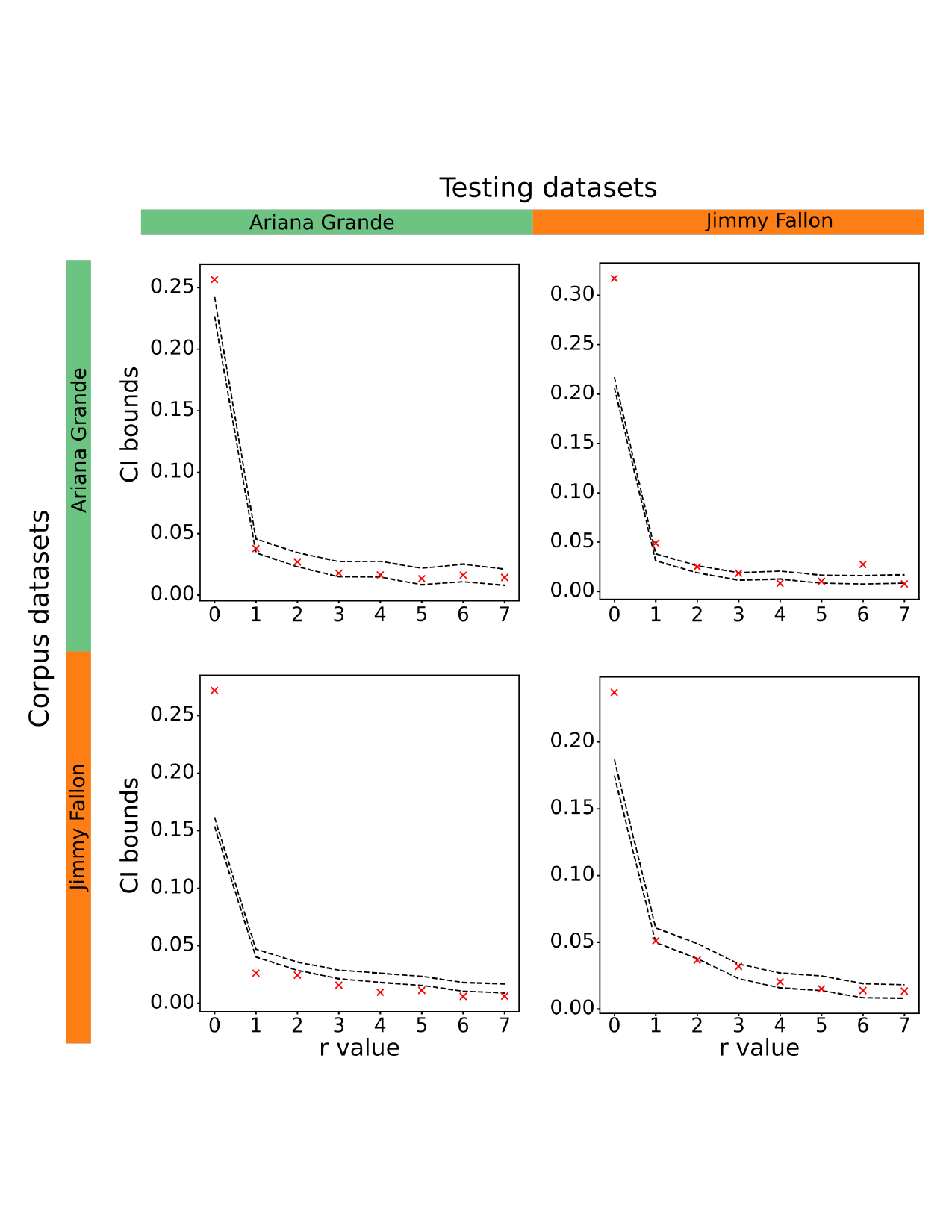}
		\caption{The CIs are constructed from the Corpus and the detecting points are calculated from the testing sets.}
		\label{fig: Application Grande}
	\end{center}
\end{figure}

\begin{figure}[ht]
	\begin{center}
		\includegraphics[width=\linewidth,trim={3.0cm 14.0cm 3.8cm 2.5cm},clip]{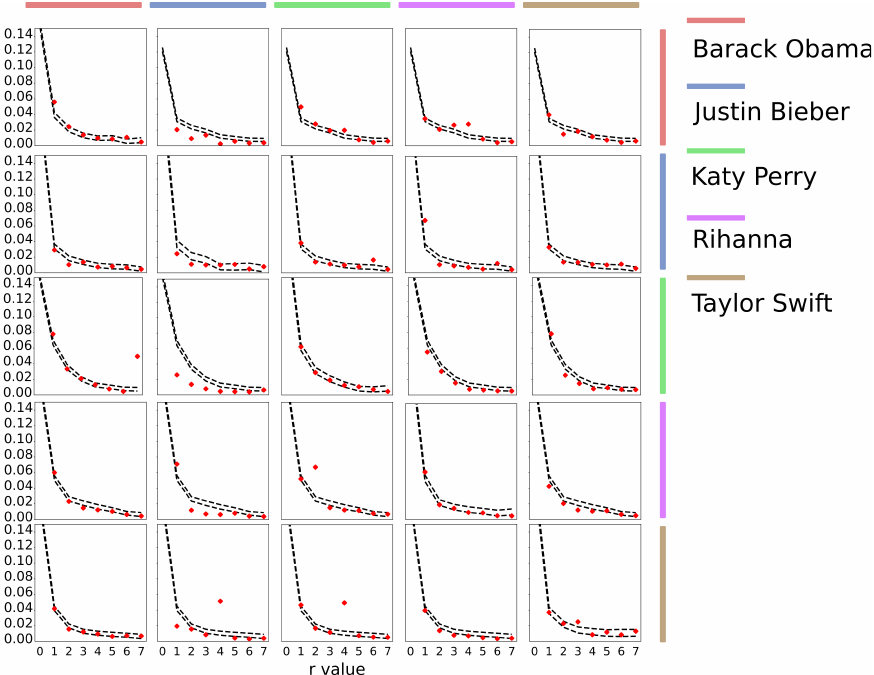}
		\caption{The CIs are constructed from the Corpus and the detecting points are calculated from the testing sets. The row indicates the corpus set and the column indicates the testing set.}
		\label{fig: Application 5}
	\end{center}
\end{figure}

We now illustrate our methodology by applying it to real-world data. For simplicity and comparability, we calculate all CIs using the Normal CI given in \eqref{eq: Normal CI}. The data come from X (formerly Twitter). The goal is to check if two X accounts are from the same author, which is important for the problem of identifying fake accounts. In this context, the writing samples are comprised of tweets from two accounts. These have been preprocessed to remove capitalization, punctuation, and URLs. Furthermore, retweets are excluded from the data. All data in this section were obtained from \cite{TwitterData}; see also \cite{TwitterData:paper}.

We begin by comparing two popular X users: Ariana Grande and Jimmy Fallon. The first sample consists of all tweets from Ariana Grande in 2015-2017 and the second consists of all tweets from Jimmy Fallon in 2013-2017. The sample from Ariana Grande contains $52647$ words and the one from Jimmy Fallon contains $36365$ words.

First, as a baseline, we compare each author's writing with writing of the same author. Toward this end, we randomly divide each writing sample into two parts. We use one part as the corpus to construct the CIs and the other as the testing set. The results are given in the plots on the diagonal in Figure \ref{fig: Application Grande}. We can see that most of the detecting points fall inside the CI, which indicates that the testing set is from the same author. The main exception is for $r=0$. It seems that we have too many words in the testing set that do not appear in the corpus. This may be part of the nature of X, where certain topics and people become part of the zeitgeist for a short period of time and are just referred to in one tweet. Another reason may be related to when asymptotic normality holds for $r=0$. Whatever the reason, these results suggest that, for authorship attribution in the context of X data, the use of values of $r \ge 1$ is more relevant and seems to work well.

Next, we use the full writing sample from each author as the corpus to construct the CIs and the full writing sample from the other author as the testing set. The results are given in the off-diagonal plots in Figure \ref{fig: Application Grande}. We see that most of the points fall outside the CI, which indicates that the testing set is from a different author. Note that things are not symmetric and that the detecting points seem to be further outside the CI when we use Jimmy Fallon's tweets as the corpus than when we use Ariana Grande's tweets.

To better illustrate the methodology, we compare five of the top X (then Twitter) users in 2017. The results are given in Figure \ref{fig: Application 5}. The plots do not include $r=0$, as these would make it hard to zoom in on the main parts of the plot, and, as we have seen, they do not seem to be relevant in the context of this application. Overall, the method seems to work well. However, in some cases, e.g., Justin Bieber, there are quite a few points that are outside the CI when comparing the authors to themselves. Similarly, there are situations where the method has difficulty distinguishing between two different authors, e.g., Justin Bieber and Taylor Swift.

\section{Discussion}\label{sec: disc}

In this paper, we performed a simulation study to compare the various CIs for the occupancy probabilities that appear in the literature. We saw that asymptotic CIs tend to work better than intervals based on concentration inequalities, as the latter tend to be overly conservative. We also introduced the Heuristic CI, which aims to select the most appropriate CI for a given random sample. Our simulation results indicate that it performs very well across a wide range of scenarios, including ones for which it is not known if the asymptotic results hold. This suggests that the Heuristic CI is robust in practice. Furthermore, we introduced a novel methodology for authorship attribution and applied it to several datasets from users of X (formerly Twitter). Along the way, we proved several theoretical results, which give more context to our simulations and are interesting in their own right. 

Throughout this paper, since the quantity of interest $\pi_{r,n}\in[0,1]$, we truncated all CIs to ensure that the lower bound is at least $0$ and that the upper bound is at most $1$. This does not affect the performance of the CI as $\pi_{r,n}$ is an element of the original CI if and only if it is an element of the truncated one. On the other hand, it makes it easier to interpret the width of the CI. For instance, if, after truncation, the width of the CI is $1$, then the interval covers all possible values of $\pi_{r,n}$ and is, thus, not informative.

We conclude the paper with a brief summary/discussion of our simulation results from the perspective of heavy-tails. It has been observed that Turing's estimator tends to work better for heavier-tailed distributions than for lighter-tailed ones. Empirically, this was seen in the simulation study conducted in \cite{Grabchak:Cosme:2017}, which considered the relative error of Turing's estimator with $r=0$. Theoretically, it is suggested by the fact that results for consistency and asymptotic normality of Turing's estimator are only known for relatively heavy tailed distributions, see, e.g., \cite{Ohannessian:Dahleh:2012}, \cite{Grabchak:Zhang:2017}, or \cite{Chang:Grabchak:2023}. The simulation results in the current paper echo this observation.

The discrete Pareto distribution has the heaviest tails among all of the distributions that we considered, and it is the only one of them for which asymptotic normality is known to hold without requiring a dynamic setting. The parameter $\alpha>0$ determines how heavy the tails are. The lower the value of $\alpha$, the heavier the tails. Our simulation results showed that the lower this value, the quicker the coverage proportion for the Normal CI approaches $0.95$. 

The geometric distribution has exponential tails, which are often considered to be on the boundary between heavy and light. The parameter $p\in(0,1)$ determines how heavy the tails are, with lower values of $p$ leading to heavier tails. Our simulation results showed that the lower this value, the closer the coverage proportions for both the Normal CI and the Poisson CI are to $0.95$. Asymptotic normality only holds in the dynamic case, where $p\to0$, i.e., as the tails get progressively heavier.

The discrete uniform distribution takes on only a finite number of possible values and is thus light-tailed. The parameter $K\ge1$ determines how heavy the tails are. The larger the value of $K$, the heavier the tails. Our simulation results showed that the larger this value, the longer the period during which the Normal CI is close to $0.95$ in its pre-limit apparent convergence phase. Furthermore, we observed that asymptotic normality and asymptotic Poissonity only hold in the dynamic case, where $K\to\infty$, which is, again, as the tails get progressively heavier. 

\backmatter

\bigskip

\begin{appendices}

\section{Conditions For Convergence}\label{sec: cond}

In this section, we summarize the known sufficient conditions for the convergence in \eqref{eq: asymp normal}, \eqref{eq: asymp ratio normal}, and \eqref{eq: asymp Pois} to hold. Let $\mathcal P_n=\{p_{\ell}^{(n)}:\ell\in\mathcal A\}$ be a sequence of probability distributions on alphabet $\mathcal A$. 

\begin{prop}\label{prop: suf cond}
	Let 
	$$
	s^2_{r,n} = \sum_{\ell\in\mathcal A} \left(r+1+np_{\ell}^{(n)}\right) e^{-np_{\ell}^{(n)}} \frac{\left(np_{\ell}^{(n)}\right)^{r+1}}{r!}.
	$$
	1. If $s_{r,n}\to\infty$, $s_{r,n}/\sqrt n\to0$, and 
	\begin{align}\label{eq: Lindeberg type condition}
		\lim_{n \rightarrow \infty}s_{r,n}^{-2}\sum_{\ell\in\mathcal A}e^{-n p_{\ell}^{(n)} }\left(n p_{\ell}^{(n)}\right)^{(r+2)}1_{\left[n p_{\ell}^{(n)} \geq \epsilon s_{r,n}\right]} = 0 \hspace{0.5cm} \forall \epsilon > 0,
	\end{align}
	then both \eqref{eq: asymp normal} and \eqref{eq: asymp ratio normal} hold.\\
	2. If $s_{r,n}\to c\in(0,\infty)$ and \eqref{eq: Lindeberg type condition} holds, then \eqref{eq: asymp Pois} holds with $c^*=c^2/(r+1)^2$.\\
	3. If $r=0$, $s_{0,n}\to\infty$, $\limsup_{n\to\infty} \rE[N_{1,n}]/n <1$, and \eqref{eq: Lindeberg type condition} holds, then \eqref{eq: asymp normal} holds.
\end{prop}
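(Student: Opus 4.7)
My proof plan is to verify that the three parts of the proposition follow by aggregating and organizing known results in the literature. Since the proposition is explicitly presented as a summary of sufficient conditions, the work is organizational rather than new: I need to match notations, hypotheses, and conclusions across \cite{Zhang:Huang:2008}, \cite{Zhang:Zhang:2009}, \cite{Zhang:2013}, \cite{Grabchak:Zhang:2017}, and \cite{Chang:Grabchak:2023}, and confirm that together they cover all three cases.

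For Part 1, I would begin by decomposing
$$
n\left(T_{r,n} - \pi_{r,n}\right) = \sum_{\ell \in \mathcal A} \left[(r+1) 1_{[y_{\ell,n}=r+1]} - n p_\ell^{(n)} 1_{[y_{\ell,n}=r]}\right]
$$
and Poissonizing the sample size so that the summands for different letters become independent. A direct second-moment calculation recovers $s^2_{r,n}$ exactly as written. The hypothesis $s_{r,n} \to \infty$ together with the Lindeberg-type condition \eqref{eq: Lindeberg type condition} then lets me apply the Lindeberg-Feller CLT to the Poissonized sum, while $s_{r,n}/\sqrt n \to 0$ controls the de-Poissonization error; this yields \eqref{eq: asymp normal} as in Theorem 1 of \cite{Zhang:2013} and its extension to $r \ge 1$ in \cite{Chang:Grabchak:2023}. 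The ratio statement \eqref{eq: asymp ratio normal} then follows by Slutsky's theorem, using the consistency $\hat s_{r,n}^2/s^2_{r,n} \conp 1$ (obtained by bounding the variance of each summand in the formula for $\hat s_{r,n}^2$), paralleling \cite{Grabchak:Zhang:2017}.

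For Part 2, the plan is to apply a Poisson limit theorem to $N_{r+1,n} = \sum_{\ell} 1_{[y_{\ell,n}=r+1]}$ after Poissonization. Under $s_{r,n} \to c$, a short computation exploiting the fact that $(r+1+np_\ell^{(n)}) \approx (r+1)$ on the bulk of the sum shows that $\rE[N_{r+1,n}] \to c^2/(r+1)^2 = c^*$, and the Lindeberg condition guarantees asymptotic negligibility of each indicator. Either a direct characteristic-function computation or a Chen-Stein bound then gives $N_{r+1,n} \cond \mathrm{Pois}(c^*)$, and since $\frac{n}{r+1} T_{r,n} = N_{r+1,n}$ identically, \eqref{eq: asymp Pois} follows. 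The $L^2$ and probability statements about $\pi_{r,n}$ reduce to direct first- and second-moment computations using \eqref{eq: bias}, carried out in Theorem 2 of \cite{Zhang:Zhang:2009} for $r=0$ and Theorem 2.3 of \cite{Chang:Grabchak:2023} for $r \ge 1$.

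Part 3 is a direct appeal to the main result of \cite{Zhang:Huang:2008}: when $r=0$ one has the deterministic identity $T_{0,n} = N_{1,n}/n \in [0,1]$, so the alternative condition $\limsup \rE[N_{1,n}]/n < 1$ plays the role that $s_{r,n}/\sqrt n \to 0$ plays in Part 1 by ruling out degeneracy of the normalization. The main obstacle in writing the proof out carefully will be the de-Poissonization step in Part 1, where the weak dependence between the indicators $1_{[y_{\ell,n}=\cdot]}$ induced by the constraint $\sum_\ell y_{\ell,n} = n$ has to be controlled; standard estimates show the induced error is $O(s_{r,n}/\sqrt n)$, but the bookkeeping is delicate and is the technical heart of the argument.
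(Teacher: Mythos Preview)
Your approach is essentially the same as the paper's: the proposition is not proved from scratch but justified by citation, and the paper is even terser than your outline, attributing Parts 1 and 2 directly to Theorems 2.1 and 2.3 of \cite{Chang:Grabchak:2023} and Part 3 to Theorem 1 of \cite{Zhang:Zhang:2009} (with a pointer to \cite{Esty:1983}). Your sketch of the underlying Poissonization/Lindeberg--Feller machinery is accurate and more informative than what the paper provides; the one correction is that for Part 3 the relevant reference is \cite{Zhang:Zhang:2009} rather than \cite{Zhang:Huang:2008}.
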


The first two parts are given in Theorems 2.1 and 2.3 of \cite{Chang:Grabchak:2023}. The third is given in Theorem 1 of \cite{Zhang:Zhang:2009}, see also \cite{Esty:1983}. The following sufficient condition for \eqref{eq: Lindeberg type condition} can be found in, e.g., Proposition 2.2 of \cite{Chang:Grabchak:2023}.

\begin{lemma}
	If  $s_{r,n}/\log n\to\infty$, then \eqref{eq: Lindeberg type condition} holds.
\end{lemma}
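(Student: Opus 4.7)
The plan is to control the Lindeberg-type sum by combining three ingredients: a pointwise upper bound on the function $\lambda\mapsto e^{-\lambda}\lambda^{r+2}$, the smallness of that bound once $\lambda$ is forced above $\epsilon s_{r,n}$, and a pigeonhole-style count on how many indices can satisfy $np_\ell^{(n)}\ge\epsilon s_{r,n}$, using the probability-normalization constraint $\sum_\ell np_\ell^{(n)}=n$. The exponential smallness and the factor $n$ from the counting step will be pitted against each other and resolved using the hypothesis $s_{r,n}/\log n\to\infty$.

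Writing $\lambda_\ell = np_\ell^{(n)}$ for brevity, the first step is to observe that $\lambda\mapsto e^{-\lambda/2}\lambda^{r+2}$ attains a finite maximum $C_r$ on $[0,\infty)$ (by elementary calculus), which yields the pointwise bound $e^{-\lambda}\lambda^{r+2}\le C_r e^{-\lambda/2}$. The second step restricts to $\lambda_\ell\ge\epsilon s_{r,n}$, on which $e^{-\lambda_\ell/2}\le e^{-\epsilon s_{r,n}/2}$, so each summand in the Lindeberg condition is at most $C_r e^{-\epsilon s_{r,n}/2}$. The third step uses $\sum_\ell\lambda_\ell=n$ to conclude $|\{\ell:\lambda_\ell\ge\epsilon s_{r,n}\}|\le n/(\epsilon s_{r,n})$. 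Combining these three bounds gives
$$s_{r,n}^{-2}\sum_{\ell}e^{-\lambda_\ell}\lambda_\ell^{r+2}\mathbf{1}_{[\lambda_\ell\ge\epsilon s_{r,n}]}\;\le\;\frac{C_r\,n}{\epsilon\,s_{r,n}^{3}}\,e^{-\epsilon s_{r,n}/2}.$$

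Finally, I would invoke the hypothesis: $s_{r,n}/\log n\to\infty$ implies that for any fixed $K>0$ we have $\epsilon s_{r,n}/2 \ge K\log n$ for all large $n$, hence $e^{-\epsilon s_{r,n}/2}\le n^{-K}$. Choosing any $K>1$ (say $K=2$) forces $n\,e^{-\epsilon s_{r,n}/2}\to 0$, and since also $s_{r,n}\to\infty$, the right-hand side of the display tends to $0$, establishing the claim.

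I do not anticipate a genuine obstacle here; the argument is essentially a truncation estimate, and the hypothesis $s_{r,n}/\log n\to\infty$ is precisely calibrated so that the exponential decay of $e^{-\epsilon s_{r,n}/2}$ overwhelms the $n$ produced by the pigeonhole count. The most delicate point is conceptual rather than technical: recognizing that one should not try to control the tail sum using the structure of $s_{r,n}^2$ itself, but instead bound the number of ``large'' indices via the $L^1$ constraint on $\{\lambda_\ell\}$, which costs exactly one factor of $n/s_{r,n}$.
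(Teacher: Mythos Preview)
Your argument is correct. The paper itself does not prove this lemma; it simply cites Proposition~2.2 of \cite{Chang:Grabchak:2023} as the source, so there is no in-paper proof to compare against. Your self-contained argument --- bounding $e^{-\lambda}\lambda^{r+2}\le C_r e^{-\lambda/2}$, using the constraint $\sum_\ell np_\ell^{(n)}=n$ to control the cardinality of the large-index set by $n/(\epsilon s_{r,n})$, and then invoking $s_{r,n}/\log n\to\infty$ to kill the resulting factor of $n$ via exponential decay --- is sound and is exactly the kind of truncation estimate one expects here. One minor wording quibble: when you write ``$\epsilon s_{r,n}/2\ge K\log n$ for all large $n$,'' you are implicitly reparametrizing (the hypothesis gives $s_{r,n}\ge M\log n$ for any $M$, so take $M=2K/\epsilon$); this is harmless but worth stating explicitly.
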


\section{Proofs}\label{sec: proofs}

\subsection{Asymptotics for the Dynamic Uniform}\label{sec: dynamic uniform theory}

In this section, we derive asymptotic results for the dynamic uniform distribution. Let $\mathcal P_n=\{p_{\ell}^{(n)}:\ell\in\mathcal A\}$ be as in \eqref{eq: dynamic unif} and note that, in this case,
\begin{eqnarray*}
	s^2_{r,n} &=& \lfloor n^\gamma\rfloor \left(r+1+\frac{n}{\lfloor n^\gamma\rfloor} \right)e^{-n/\lfloor n^{\gamma}\rfloor}\frac{\left(n/  \lfloor n^\gamma\rfloor\right)^{r+1}}{r!}.
\end{eqnarray*}
It is readily checked that for any $\gamma>0$
$$
0\le \frac{n}{\lfloor n^{\gamma}\rfloor} - n^{1-\gamma} = \frac{n^{1-\gamma}}{\lfloor n^{\gamma}\rfloor}\left(n^\gamma-\lfloor n^{\gamma}\rfloor\right) \le \frac{n^{1-\gamma}}{\lfloor n^{\gamma}\rfloor} \sim n^{1-2\gamma}.
$$
Thus, for $\gamma>1/2$, we have $e^{-n/\lfloor n^{\gamma}\rfloor}\sim e^{-n^{1-\gamma}}$. It follows that for $\gamma=1$ we have 
$$
s^2_{r,n} \sim n \frac{(r+2)}{r!} e^{-1}
$$ 
and for $\gamma>1$ we have
$$
s^2_{r,n} \sim n^{r+1-\gamma r}\frac{\left(r+1\right) }{r!}.
$$

When $\gamma\in(0,1)$, we have 
\begin{eqnarray*}
	s^2_{r,n} &\le&  (n^\gamma -1)^{-r}  \left(r+1+\frac{n}{ n^\gamma-1} \right)e^{-n/( n^{\gamma}+1)}\frac{n^{r+1}}{r!}\to0
\end{eqnarray*}
and the conditions in Proposition \ref{prop: suf cond} do not hold. 

Next, consider the case where $\gamma=1$. We have $s_{r,n}\to\infty$ and $s_{r,n}/\log n\to\infty$, but $s_{r,n}/\sqrt n\to \sqrt{\frac{(r+2)}{r!} }e^{-1/2}$. Thus Proposition \ref{prop: suf cond} can only give us results for $r=0$. In this case,
\begin{eqnarray*}
	\frac{\rE[N_{1,n}]}{n} = \frac{\sum_{\ell\in\mathcal A} {n\choose 1} p_{\ell}^{(n)} \left(1-p_{\ell}^{(n)}\right)^{n-1}}{n} =   (1-n^{-1})^{n-1}\to e^{-1}<1.
\end{eqnarray*}
From here, Proposition \ref{prop: suf cond} implies that \eqref{eq: asymp normal} holds.  

Finally, we turn to the case $\gamma>1$. When $r=0$, we have $s_{r,n}/\sqrt n\to \sqrt{\frac{(r+2)}{r!} }$ and 
\begin{eqnarray*}
	\frac{\rE[N_{1,n}]}{n} =  (1-1/\lfloor n^{\gamma}\rfloor)^{n-1}\to 1.
\end{eqnarray*}
Thus, the conditions in Proposition \ref{prop: suf cond} do not hold. Henceforth assume that $r\ge1$. When $\gamma>1/r+1$ we have $s^2_{r,n}\to0$ and the conditions in Proposition \ref{prop: suf cond} do not hold. When $\gamma=1/r+1$, we have $s^2_{r,n}\to \frac{\left(r+1\right) }{r!}$. It follows that for any $\epsilon>0$ and large enough $n$
$$
1_{\left[n^{1-\gamma}>\epsilon s_{r,n}\right]} = 0.
$$
Hence, \eqref{eq: Lindeberg type condition} holds and Proposition \ref{prop: suf cond} implies that \eqref{eq: asymp Pois} holds with $c^*=1/(r+1)!$. When $\gamma<1/r+1$, we have $s^2_{r,n}\to\infty$, $s_{r,n}/\log n\to\infty$, and $s_{r,n}/\sqrt n\to0$. Thus, Proposition \ref{prop: suf cond} implies that \eqref{eq: asymp normal} and \eqref{eq: asymp ratio normal} hold in this case.

\subsection{Proof of Proposition \ref{prop: norm for dyn geo}}\label{eq: proof prop geo}

Let $f_n(x)=(e^{1/a_n}-1)e^{-x/a_n}$ for $x\in\mathbb R$ and note that $p_{\ell}^{(n)}=f_n(\ell)$ for $\ell=1,2,\dots$. Next, for $\eta>0$, let $g_\eta(x) = e^{-x}x^{\eta}$ for $x\ge0$ and let $h_{n,\eta}(x) = g_\eta(nf_n(x))$ for $x\in\mathbb R$. Note that $s^2_{r,n} =\frac{1}{r!} \sum_{\ell=1}^\infty \left((r+1)h_{n,r+1}(\ell)+ h_{n,r+2}(\ell)\right)$. Since $g'_\eta(x) = x^{\eta-1}e^{-x}(\eta-x)$, $g_\eta$ is increasing on $(0, \eta)$, decreasing on $(\eta, \infty)$, and $\max_{x\ge0}g_\eta(x) = g_\eta(\eta)$. Next, let $x_{n}^{*\eta}=-a_n \log\left(\eta n^{-1}(e^{1/a_n}-1)^{-1}\right)$ and note that $nf_n(x_n^{*\eta})=\eta$. Since $f_n$ is monotonically decreasing, it follows that $h_{n,\eta}$ is increasing on $(-\infty,x_n^{*\eta})$ and decreasing on $(x_n^{*\eta},\infty)$. 

\begin{lemma}\label{lemma: asym s}
	Assume that $a_n\to\infty$ with $a_n/n\to0$. \\
	1. We have
	\begin{eqnarray}\label{eq: to 0}
		\lim_{n\to\infty} n\left(e^{1/a_n}-1\right) = \lim_{n\to\infty} n\left(1-e^{-1/a_n}\right) = \infty.
	\end{eqnarray}
	2. For any $\eta>0$, we have
	$$
	\sum_{\ell=1}^\infty h_{n,\eta}(\ell) \sim a_n \Gamma(\eta).
	$$
	3. For any integer $r\ge0$, we have 
	$
	s^2_{r,n} \sim 2a_n(r+1).
	$
\end{lemma}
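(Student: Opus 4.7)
My plan is to handle the three parts in order, with part 2 containing the essential analytic content and parts 1 and 3 reducing to direct computation. For part 1, $a_n\to\infty$ gives $1/a_n\to 0$, so Taylor expansion yields $e^{1/a_n}-1\sim 1/a_n$ and $1-e^{-1/a_n}\sim 1/a_n$; multiplying by $n$ and using $a_n/n\to 0$ forces both quantities to $\infty$.

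For part 2 I would first evaluate the continuous integral $\int_0^\infty h_{n,\eta}(x)\,dx$ via the substitution $u = nf_n(x)$, which satisfies $du/dx = -u/a_n$ and maps $x\in(0,\infty)$ bijectively onto $u\in(0, n(e^{1/a_n}-1))$. This converts the integral into
\begin{equation*}
\int_0^\infty h_{n,\eta}(x)\,dx \;=\; a_n \int_0^{n(e^{1/a_n}-1)} u^{\eta-1}e^{-u}\,du,
\end{equation*}
and by part 1 the upper limit tends to $\infty$, so the integral is asymptotic to $a_n\Gamma(\eta)$. I would then pass from the integral to the series $\sum_{\ell=1}^\infty h_{n,\eta}(\ell)$ using the unimodality of $h_{n,\eta}$ already recorded in the excerpt. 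Concretely, the peak lies at $x_n^{*\eta} = a_n\log(n(e^{1/a_n}-1)/\eta)$, which tends to $\infty$ by part 1, so for large $n$ I can split the sum at $\lfloor x_n^{*\eta}\rfloor$ and compare each monotone branch to the corresponding piece of the integral via standard upper/lower Riemann-sum bounds. Since $\max h_{n,\eta} = \eta^\eta e^{-\eta}$ is an $n$-free constant, the total discrepancy between sum and integral is $O(1)$, hence negligible compared to $a_n\Gamma(\eta)\to\infty$.

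For part 3 I plug the result of part 2 into the representation $s^2_{r,n} = \tfrac{1}{r!}\sum_{\ell=1}^\infty\bigl((r+1)h_{n,r+1}(\ell)+h_{n,r+2}(\ell)\bigr)$ given just above the lemma statement. Using $\Gamma(r+1)=r!$ and $\Gamma(r+2)=(r+1)!$, this yields
\begin{equation*}
s^2_{r,n} \;\sim\; \tfrac{a_n}{r!}\bigl((r+1)r! + (r+1)!\bigr) \;=\; 2a_n(r+1).
\end{equation*}

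The main obstacle is the sum-versus-integral bound in part 2. The feature that makes it work is that the width of the peak of $h_{n,\eta}$ in the $x$ variable scales like $a_n$ (the substitution shows that a range $u\in[\eta-c,\eta+c]$ of width $O(1)$ corresponds to a range of $x$ of width $O(a_n)$), so adjacent integer values of $\ell$ sit close together relative to the peak scale, and this is precisely what makes the Riemann approximation sharp. The argument therefore hinges essentially on the hypothesis $a_n\to\infty$, which is also why the lemma is restricted to that regime.
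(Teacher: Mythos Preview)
Your proposal is correct and follows essentially the same route as the paper. The paper proves part~1 via the inequality $e^{1/a_n}-1\ge 1/a_n$ rather than Taylor expansion, and for part~2 it invokes an ``Euler--Maclaurin Lemma'' (Lemma~1.6 in \cite{Zhang:2017}) for unimodal summands in place of your explicit Riemann-sum splitting at the peak, but the underlying mechanism---substitution $t=nf_n(x)$ in the integral and an $O(1)$ sum-versus-integral error coming from the $n$-free bound $\max h_{n,\eta}=\eta^\eta e^{-\eta}$---is identical; part~3 is handled exactly as you do.
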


\begin{proof}
	First, note that 4.2.33 in \cite{Abramowitz:Stegun:1972} implies that for large enough $n$, 
	we have
	$$
	n\left(e^{1/a_n}-1\right) \ge  \frac{n}{a_n}\to\infty
	$$
	and hence
	$$
	n\left(1-e^{-1/a_n}\right) =  e^{-1/a_n} n\left(e^{1/a_n}-1\right) \to\infty.
	$$
	Next, note that $x_n^{*\eta}\to\infty$, $h_{n,\eta}(x_n^{*\eta})/a_n = g_\eta(\eta)/a_n\to0$, and $h_{n,\eta}(1)/a_n $ $=$ $g_{\eta}(n(1-e^{-1/a_n}))/a_n $ $\to$ $0$, where we use \eqref{eq: to 0} and the fact that $g_\eta(x)\to 0$ as $x\to\infty$. By the Euler-Maclaurin Lemma, see e.g.\ Lemma 1.6 in \cite{Zhang:2017}, we have
	\begin{eqnarray*}
		\lim_{n\to\infty} \frac{\sum_{\ell=1}^\infty h_{n,\eta}(\ell)}{a_n \Gamma(\eta)} &=& \lim_{n\to\infty} \frac{\int_1^\infty h_{n,\eta}(x) \rd x}{a_n \Gamma(\eta)}\\
		&=&  \lim_{n\to\infty}  \frac{a_n\int_0^{n\left(e^{1/a_n}-1\right)} e^{-t} t^{\eta-1} \rd t}{a_n \Gamma(\eta)} = 1,
	\end{eqnarray*}
	where we use the change of variables $t=n f_n(x)$ and \eqref{eq: to 0}. Turning to the third part, we have
	\begin{eqnarray*}
		\frac{s_{r,n}^2}{a_n(r+1)} &=& \frac{\left((r+1)\sum_{k=1}^\infty h_{n,r+1}(k) +\sum_{k=1}^\infty h_{n,r+2}(k) \right)/r!}{2a_n(r+1)} \\
		&\to& \frac{\left((r+1)\Gamma(r+1) +\Gamma(r+2) \right)/r!}{2(r+1)}=1,
	\end{eqnarray*}
	which completes the proof.
\end{proof}

\begin{lemma}\label{lemma: asym s finite}
	Assume that $a_n\to a\in[0,\infty)$. \\
	1. For any $\eta>0$, we have
	$$
	a\Gamma(\eta)-2 e^{-\eta}\eta^\eta \le \liminf_{n\to\infty} \sum_{\ell=1}^\infty h_{n,\eta}(\ell) \le\limsup_{n\to\infty} \sum_{\ell=1}^\infty h_{n,\eta}(\ell) \le a \Gamma(\eta) + 2 e^{-\eta}\eta^\eta.
	$$
	2. For any integer $r\ge0$, \eqref{eq: bounds for fixed geo} holds. 
\end{lemma}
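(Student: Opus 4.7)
My plan is to follow the structure of the proof of Lemma \ref{lemma: asym s}, but to retain the Euler-Maclaurin error terms, since they no longer vanish asymptotically when $a_n$ stays bounded.

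For Part 1, I would first check that $nf_n(1) = n(1 - e^{-1/a_n}) \to \infty$ in both sub-cases: if $a > 0$, this holds because $1 - e^{-1/a_n} \to 1 - e^{-1/a} > 0$, while if $a = 0$, it holds because $e^{-1/a_n} \to 0$ forces $1 - e^{-1/a_n} \to 1$. Recall from the proof of Lemma \ref{lemma: asym s} that $h_{n,\eta}$ is unimodal on $[1,\infty)$ with global maximum $g_\eta(\eta) = e^{-\eta}\eta^\eta$, which is independent of $n$. Applying the Euler-Maclaurin Lemma (Lemma 1.6 in \cite{Zhang:2017}) to $h_{n,\eta}$ on $[1,\infty)$ then gives that $\sum_{\ell=1}^\infty h_{n,\eta}(\ell)$ and $\int_1^\infty h_{n,\eta}(x)\,\rd x$ differ by at most a constant multiple of this maximum. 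Using the same change of variables $t = nf_n(x)$ as in Lemma \ref{lemma: asym s}, the integral equals $a_n\int_0^{nf_n(1)} e^{-t}t^{\eta-1}\,\rd t$, which converges to $a\,\Gamma(\eta)$ since $nf_n(1)\to\infty$ and $a_n\to a$. Passing to $\liminf$ and $\limsup$ yields the inequalities in Part 1.

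For Part 2, I would combine the bounds from Part 1 using the decomposition $s^2_{r,n} = \frac{r+1}{r!}\sum_{\ell=1}^\infty h_{n,r+1}(\ell) + \frac{1}{r!}\sum_{\ell=1}^\infty h_{n,r+2}(\ell)$ already established in the proof of Lemma \ref{lemma: asym s}. Applying Part 1 with $\eta = r+1$ and with $\eta = r+2$, and using $\Gamma(r+1) = r!$ together with $\Gamma(r+2) = (r+1)!$, the leading-order terms combine cleanly as $\frac{r+1}{r!}\cdot a\cdot r! + \frac{1}{r!}\cdot a\cdot(r+1)! = 2a(r+1)$, exactly matching the leading term in \eqref{eq: bounds for fixed geo}. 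The two error contributions then aggregate into the remaining terms. The $\max\{0,\cdot\}$ in the lower bound is needed because $s^2_{r,n}\ge 0$ is automatic, so a negative lower bound can be replaced by $0$.

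The main technical obstacle is pinning down the exact constants in the Euler-Maclaurin error to match the precise form $2(r+1)^{r+1}e^{-(r+1)} + 2(r+2)^{r+2}e^{-(r+2)}$ in \eqref{eq: bounds for fixed geo}; this should follow from splitting each sum at its mode $x_n^{*\eta}$ and bounding each monotone piece against its integral with a single maximum-value correction. A smaller subtlety is the $a=0$ case, in which the geometric distribution degenerates toward a near point mass at $\ell=1$ and the sum itself tends to $0$; however the argument goes through unchanged, with the bound reducing to $\pm 2e^{-\eta}\eta^\eta$, which is trivially valid.
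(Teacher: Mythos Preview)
Your proposal is correct and essentially identical to the paper's argument: the paper bounds $\sum_{\ell\ge1} h_{n,\eta}(\ell)$ by $\int_1^\infty h_{n,\eta}(x)\,\rd x \pm 2h_{n,\eta}(x_n^{*\eta})=\int_1^\infty h_{n,\eta}(x)\,\rd x\pm 2e^{-\eta}\eta^\eta$ via the same unimodality/split-at-the-mode idea you describe, applies the substitution $t=nf_n(x)$ to show the integral tends to $a\Gamma(\eta)$, and then says Part 2 ``follows immediately'' from the decomposition $s_{r,n}^2=\tfrac{r+1}{r!}\sum h_{n,r+1}+\tfrac{1}{r!}\sum h_{n,r+2}$. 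Your concern about the Euler--Maclaurin constant is thus unnecessary for Part~1 (the factor is exactly $2$, one maximum-value correction per monotone piece); for Part~2, note that plugging the Part~1 bounds into this decomposition literally yields error terms $\tfrac{2(r+1)^{r+2}}{r!}e^{-(r+1)}+\tfrac{2(r+2)^{r+2}}{r!}e^{-(r+2)}$, which do not coincide with the constants printed in \eqref{eq: bounds for fixed geo} for $r\ge1$---this is a cosmetic inconsistency in the paper's stated constants, not a defect in the method, and the qualitative conclusion that $s_{r,n}^2$ remains bounded is unaffected.
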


\begin{proof}
	By arguments as in the proof of Lemma 1.6 in \cite{Zhang:2017}, we have
	\begin{eqnarray*}
		\sum_{k=1}^\infty h_{n,\eta}(k) &\le& \int_1^\infty h_{n,\eta}(x)\rd x + 2 h_{n,\eta}(x_n^{*\eta}) \\
		&=&  a_n\int_0^{n\left(e^{1/a_n}-1\right)} e^{-t} t^{\eta-1} \rd t + 2 g_{\eta}(\eta) \to a\Gamma(\eta)+2 e^{-\eta}\eta^\eta,
	\end{eqnarray*}
	where the second line follows by a change of variables as in the proof of Lemma \ref{lemma: asym s}. Similarly,
	\begin{eqnarray*}
		\sum_{k=1}^\infty h_{n,\eta}(k) &\ge& \int_1^\infty h_{n,\eta}(x)\rd x - 2 h_{n,\eta}(x_n^{*\eta}) 
		\to a\Gamma(\eta)-2 e^{-\eta}\eta^\eta.
	\end{eqnarray*}
	From here the second part follows immediately. 
\end{proof}

Lemma \ref{lemma: asym s finite} gives the second part of Proposition \ref{prop: norm for dyn geo}. We now turn to the first part. Henceforth, assume that $a_n\to\infty$ and $a_n/n\to0$.  Lemma \ref{lemma: asym s} implies that $s_{r,n}\to\infty$ and $s_{r,n}/\sqrt n\to0$. By Proposition \ref{prop: suf cond}, it suffices to show that \eqref{eq: Lindeberg type condition} holds. Toward this end, note that $nf_n(x)\le n(e^{1/a_n}-1)e^{-1/a_n}$ for $x\ge1$, thus when $\epsilon s_{r,n}> n(e^{1/a_n}-1)e^{-1/a_n}$, we have $1_{[n p_{\ell}^{(n)} \geq \epsilon s_{r,n}]} = 0$ for each $\ell=1,2,\dots$. Henceforth, assume that $\epsilon s_{r,n}\le n(e^{1/a_n}-1)e^{-1/a_n}$. In this case, we have $n f_n(x)\ge \epsilon s_{r,n}$ if and only if $x\le x_n^{**}$, where $x_n^{**}=-a_n\log(\epsilon s_{r,n} n^{-1} (e^{1/a_n}-1)^{-1})$. Furthermore, since $s_{r,n}\to\infty$, we have $x^{**}_{n}\le x_n^{*(r+2)}$ for large enough $n$. It follows that, for such $n$, $h_{n,r+2}$ is monotonically increasing for $x\le x^{**}$ and
\begin{eqnarray*}
	&&s_{r,n}^{-2}\sum_{\ell=1}^{\infty}e^{-n p_{\ell}^{(n)}}\left(n p_{\ell}^{(n)}\right)^{r+2}1_{[n p_{\ell}^{(n)} \geq \epsilon s_{r,n}]} \\
	&&\qquad = s_{r,n}^{-2}\sum_{\ell=1}^{\lfloor x_n^{**}\rfloor-1}  h_{n,r+2}(\ell) 
	+ s_{r,n}^{-2} h_{n,r+2}(\lfloor x_n^{**}\rfloor)\\
	&&\qquad \le s_{r,n}^{-2}\int_1^{x_n^{**}} h_{n,r+2}(x) 
	\rd x + s_{r,n}^{-2} g_{r+2}(\epsilon s_{r,n}) \\
	&&\qquad = s_{r,n}^{-2}a_n \int_{\epsilon s_{r,n}}^{n(1-e^{-1/a_n})} e^{-t}t^{r+1} \rd t + s_{r,n}^{-2} g_{r+2}(\epsilon s_{r,n}) \\\
	&&\qquad\le s_{r,n}^{-2}a_n \int_{\epsilon s_{r,n}}^{\infty} e^{-t}t^{r+1} \rd t + s_{r,n}^{-2} g_{r+2}(\epsilon s_{r,n}) \to 0,
\end{eqnarray*}
where we use  the change of variables $t=n f_n(x)$, dominated convergence, the fact that $s_{r,n}\to\infty$, the fact that $g_{r+2}(x)\to0$ as $x\to\infty$, and Lemma \ref{lemma: asym s}.

\end{appendices}

\section*{Acknowledgments}

The authors wish to thank the anonymous reviewers and the anonymous Associated Editor, whose comments led to an improvement in the presentation of this paper.

\section*{Conflict of Interest (COI)}

A previous version of this manuscript appeared as a preprint (see \cite{chang2025confidence}). Michael Grabchak is a member of the {\em Journal of Applied Statistics} editorial board.

\section*{Funding Statement}

No funding was received.

\section*{Data Availability}

The simulation code is available from the corresponding author upon reasonable request.


\end{document}